\tikzstyle{block}=[draw opacity=0.7,line width=1.4cm]
\newcommand{\oprocendsymbol}{\hbox{$\bullet$}}
\newcommand{\oprocend}{\relax\ifmmode\else\unskip\hfill\fi\oprocendsymbol}
\newcommand{\VV}{\mathcal{V}}
\newcommand{\EE}{\mathcal{E}}
\newcommand{\GG}{\mathcal{G}}
\newcommand{\real}{{\mathbb{R}}}
\newcommand{\reals}{{\mathbb{R}}}
\newcommand{\realpositive}{{\mathbb{R}}_{>0}}
\newcommand{\realnonnegative}{{\mathbb{R}}_{\ge 0}}
\newcommand{\eps}{\epsilon}
\newcommand{\until}[1]{\in\{1,\dots,#1\}}
\newcommand{\vect}[1]{\boldsymbol{\mathbf{#1}}}
\newcommand{\vectsf}[1]{\boldsymbol{\mathbf{\mathsf{#1}}}}
\newcommand{\Diag}[1]{\operatorname{Diag}(#1)}
 \newcommand{\boxend}{\hfill \ensuremath{\Box}}
\newtheorem{thm}{Theorem}[section]
\newtheorem{lem}{Lemma}[section]
\DeclareMathAlphabet{\mathpzc}{OT1}{pzc}{m}{it}
\title{\LARGE \bf Fast model averaging via buffered states and first-order accelerated optimization algorithms}
\author{Amir-Salar Esteki, Hossein Moradian and Solmaz S. Kia, \emph{Senior Member, IEEE} 
  \thanks{The authors are with the Department of Mechanical and
    Aerospace Engineering, University of California Irvine, Irvine, CA 92697, USA {\tt\small
      \{aesteki, hmoradia, solmaz\}@uci.edu}. This work was supported by NSF, under CAREER Award ECCS-1653838.}
}
\begin{document}

\maketitle

  \begin{abstract}
  In this letter, we study the problem of accelerating reaching average consensus over connected graphs in a discrete-time communication setting. Literature has shown that consensus algorithms can be accelerated by increasing the graph connectivity or optimizing the weights agents place on the information received from their neighbors. Here, instead of altering the communication graph, we investigate two methods that use buffered states to accelerate reaching average consensus over a given graph. In the first method, we study how convergence rate of the well-known first-order Laplacian average consensus algorithm changes when agreement feedback is generated from buffered states. For this study, we obtain a sufficient condition on the ranges of buffered state that leads to faster convergence. In the second proposed method, we show how the average consensus problem can be cast as a convex optimization problem and solved by first-order accelerated optimization algorithms for strongly-convex cost functions. We construct an accelerated average consensus algorithm using the so-called Triple Momentum optimization algorithm. The first approach requires less global knowledge for choosing the step size, whereas the second one converges faster in our numerical results by using extra information from the graph topology. We demonstrate our results by implementing the proposed algorithms in a Gaussian Mixture Model (GMM) estimation problem used in sensor networks.
  \end{abstract}

\textbf{keywords}: Consensus Algorithm, Accelerated Average Consensus, Delay Systems, Multi-agent systems

\section{Introduction}
In the average consensus problem the objective is to enable a group of communicating agents $\mathcal{V}=\{1,\cdots,N\}$ to arrive at the average of their local input $\mathsf{r}^i\in\real$, i.e., to obtain $\mathsf{r}^{\text{avg}}=\frac{1}{N}\sum_{j=1}^N\mathsf{r}^j$ using local interactions. The solution to this problem is of great importance in various multi-agent applications such as robot coordination~\cite{PY-RAF-KML:08}, sensor fusion~\cite{ROS-JSS:05,ROS:07,WR-UMA:17}, distributed estimation~\cite{SM:07} and  formation control~\cite{JAF-RMM:04}. In these applications, reaching fast to the average consensus is of great interest to reduce the end-to-end delays and also the convergence error caused by premature termination of the algorithm because of time constraints. 

The well-known solution for the average consensus problem is the first-order iterative \emph{Laplacian} algorithm 
\begin{subequations}\label{eq::consensus-orig}
\begin{align}
   & {x}^i(k+1)=x^i(k)-\delta\,\sum\nolimits_{j=1}^N\!\!a_{ij}(x^j(k)-
    x^i(k)),\\
    &~x^i(0)=\mathsf{r}^i,\quad i\in\VV.
\end{align}
\end{subequations}
where $a_{ij}$s are the adjacency weights. In this algorithm, each agent $i$ uses the agreement feedback $\sum\nolimits_{j=1}^N\!a_{ij}(x^j(k)-x^i(k))$ to derive its local agreement state $x^i$ towards $\mathsf{r}^{\text{avg}}$. When the interaction topology of the agents is a connected undirected graph, see Fig.~\ref{fig::graph},~\cite{ROS-JAF-RMM:07} shows that with a proper choice of stepsize $\delta$, executing~\eqref{eq::consensus-orig} guarantees $x^i\!\to\!\mathsf{r}^\text{avg}$, $i\in\VV$, as $k\!\to\!\infty$. Our objective in this paper is to obtain accelerated average consensus algorithms that have a provably faster convergence rate than algorithm~\eqref{eq::consensus-orig}. 
We consider two approaches: one using outdated agreement feedback in~\eqref{eq::consensus-orig} and the other by constructing alternative algorithms using the first-order accelerated optimization algorithms for strongly convex unconstrained optimization problems.
 
 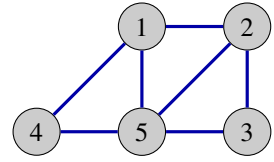
\begin{figure}[h]
\centering
\begin{tikzpicture}[scale=0.7]
\tikzset{
    adv/.style={circle,minimum size=.5cm,fill=red!20,draw=red},
    nor/.style={circle,minimum size=.5cm,fill=black!20,draw},
    a1/.style={very thick,red!80!black},
    n1/.style={very thick,blue!65!black}}
\node[nor] (1) at (0,0) {1};
\node[nor] (2) at (2,0) {2};
\node[nor] (3) at (2,-2) {3};
\node[nor] (4) at (-2,-2) {4};
\node[nor] (5) at (0,-2) {5};
\draw[n1][-] (1) -- (2);
\draw[n1][-] (1) -- (5);
\draw[n1][-] (2) -- (3);
\draw[n1][-] (3) -- (5);
\draw[n1][-] (4) -- (5);
\draw[n1][-] (1) -- (4);
\draw[n1][-] (2) -- (5);
\end{tikzpicture}\\
\caption{{\small A connected undirected graph $\mathcal{G}(\VV,\mathcal{E},\mathsf{A})$ with five agents. The adjacency weights of the agents here are $a_{ij}=a_{ji}=1$ if $(i,j)$ is in the edge set $\mathcal{E}$, and $a_{ij}=a_{ji}=0$  otherwise.}}
\label{fig::graph}
\end{figure}

For a multi-agent system with connected  undirected communication graph, the convergence rate of the average consensus algorithm~\eqref{eq::consensus-orig} is tied to the connectivity of the graph~\cite{MF:73} through the spectral radius of matrix $(\vect{I}-\delta \vect{L})$ where $\vect{L}$ is the Laplacian matrix of the graph~\cite{ROS-RMM:04,SSK-BVS-JC-RAF-KML-SM:19}.
Given this connection, 
various studies such as optimal adjacency weight selection for a given topology by maximizing the smallest non-zero eigenvalue of the Laplacian matrix~\cite{LX-SB:04,SB-AG-BP-DS:06} or rewiring the graph to create topologies such as small-world network~\cite{SK-JMF:06,PH-JSB-VG:08} with high connectivity are proposed in the literature. In this letter, instead of altering the communication graph, we investigate two methods that use buffered states to accelerate reaching average consensus over a given graph. 

Our first accelerated consensus algorithm is motivated by evidences in the literature on the positive effect of using buffered feedback on increasing the stability margin and the rate of convergence of the continuous-time linear systems~\cite{HM-SSK:20tac,WQ-RS:13}, which led to use of buffered agreement feedback to accelerate the continuous-time Laplacian average consensus algorithm~\cite{HM-SSK:20tac,YG-MS-CS-NM:16,ZM-YC-WR:10,YC-WR:10, WQ-RS:13}. Since the results obtained for the continuous-time Laplacian algorithm cannot be trivially extended to discrete-time communication setting, we investigate using out-dated feedback in~\eqref{eq::consensus-orig} to increase the convergence rate. More precisely, we explore for what values of non-zero $\mathpzc{d}$ in  
\begin{align}\label{eq::consensus-orig_dated}
   & {x}^i(k+1)=x^i(k)-\delta\,\sum\nolimits_{j=1}^N\!a_{ij}(x^j(k-\mathpzc{d})-
    x^i(k-\mathpzc{d})),\nonumber\\
   &x^i(k)=0~\text{for}~ k\in\{-\mathpzc{d} ,\cdots,-1\},~x^i(0)=\mathsf{r}^i,
\end{align}
$i\in\VV$, we can archive faster convergence than~\eqref{eq::consensus-orig}. Our contribution is to characterize values of $\mathpzc{d}\in\mathbb{Z}_{>0}$ for which convergence is accelerated.
 
Even though our results show that there always exists   $\mathpzc{d}\in\mathbb{Z}_{>0}$ in~\eqref{eq::consensus-orig_dated} that convergence is accelerated, this method has its own limitations because of restricting the structure of the algorithm to the first-order form of algorithm~\eqref{eq::consensus-orig}. This leaves one to wonder whether faster convergence can be achieved by using alternative forms. With such motivation, for example, \cite{TCA-BNO-MJC:08} proposes to improve the rate of convergence by predicting future state values using a weighted summation of current and previous states, denoted as the mixing parameter. However, it requires a complex parameter design procedure since significant improvements in the rate
of convergence are usually achieved by values outside the identified range of the mixing parameter which also requires agents to know extra global information~\cite[Equation (13)]{TCA-BNO-MJC:08}. A simple and more effective approach, however, is reported in~\cite{JB-MF-MM:18}, which casts average consensus problem as a convex optimization problem and uses the  accelerated  Nesterov's optimization method to design a fast-converging average consensus algorithm. Nesterov algorithms~\cite{nesterov2013introductory}, for convex  (denoted here as NAG-C) and for strongly-convex (denoted here as NAG-SC) cost functions, are  gradient-based optimization methods that use the buffered one-step past gradient value to accelerate convergence.  By casting the consensus algorithm as an optimization problem with the cost $\frac{1}{2}\vect{x}^\top\vect{L}\vect{x}$ where $\vect{x}$ is the aggregated agreement state of the agents,~\cite{JB-MF-MM:18} invokes NAG-C method to design its accelerated algorithm. The choice of NAG-C is because $\vect{L}$ of connected graphs is positive semi-definite and thus $\frac{1}{2}\vect{x}^\top\vect{L}\vect{x}$ is a convex function. In this letter, we show that with an alternative modeling approach, we can in fact use the NAG-SC to arrive at a faster converging average consensus algorithm. Our approach also opens the door for use of the so-called Triple Momentum (hereafter denoted as TM) algorithm which is the fastest known globally convergent
gradient-based method for minimizing
strongly convex functions~\cite{BVS-RAF-KML:18}. TM also uses the buffered one-step past gradient value, but has a provably faster convergence than the Nesterov algorithms.

\noindent \emph{Notations and definitions}:
we let $\reals$, $\realpositive$, $\realnonnegative$, $\mathbb{Z}$, and $\mathbb{C}$
denote the set of real, positive real, non-negative real, integer, and complex numbers, respectively. 
The transpose of a matrix $\vect{A}\in\real^{n\times m}$ is~$\vect{A}^\top$.
The set of eigenvalues of matrix $\vect{A}\in\real^{n\times n}$ is $\lambda(\vect{A})$ and its spectral radius is $\sigma(\vect{A})$. Recall that for a square matrix $\vect{A}$, we have~\cite{RAH-CRJ:90}
\begin{align}\label{eq::spectral_formula}
   \lim_{k\to\infty}\|\vect{A}^k\|^{1/k}=\sigma(\vect{A}),
\end{align}
and when $\vect{A}$ is symmetric, we have $\|\vect{A}\|=\sigma(\vect{A})$.
We follow~\cite{FB-JC-SM:09} to define our graph related terminologies and notations. In addition, we denote $\textup{diam}(\mathcal{G})$ as the diameter of a graph $\mathcal{G}$ which is the length of the shortest path between the most distanced nodes. For an iterative algorithm with states $\|\vect{x}(k)\|$ converging to origin, the \emph{asymptotic convergence factor} is 
$\mathfrak{r}=\underset{\vect{x}(0)\neq\vect{0}}{\sup}\lim_{k\to\infty}\left(\frac{\|\vect{x}(k)\|}{\|\vect{x}(0)\|}\right)^{\frac{1}{k}}$
and the associated convergence time is 
$\mathfrak{t}=\frac{1}{\textup{log}(1/\mathfrak{r})}.$
The convergence time $\mathfrak{t}$ represents the (asymptotic) number of steps in which $\|\vect{x}(k)\|$ reduces by the factor $1/\text{e}$. In a network of $N$ agents  with undirected connected graph
topology the graph is denoted by $\mathcal{G}(\mathcal{V},\mathcal{E},\vectsf{A})$ where $\mathcal{V}=\{1,\cdots,N\}$ is the node set, $\mathcal{E}\subset\mathcal{V}\times \mathcal{V}$ is the edge set and $\vectsf{A}=[a_{ij}]$ is the adjacency matrix of the graph. Recall that $a_{ii}=0$, $a_{ij}\in\real_{>0}$ if $j\in\VV$ can send information to agent $i\in\VV$, and zero otherwise. In an undirected graph the connection between the nodes is bidirectional and  $a_{ij}=a_{ji}$ if $(i,j)\in\mathcal{E}$. The maximum degree of a graph is $\mathsf{d}_{\max}=\max\{\sum_{j=1}^Na_{ij}\}_{i=1}^N$. Finally, an undirected graph is connected if there is a path from every agent to every other agent in the network (see e.g.~Fig.~\ref{fig::graph}). The Laplacian matrix of the graph is  $\vect{L}=\text{Diag}(\vectsf{A}\vect{1}_N)-\vectsf{A}$. The Laplacian matrix of a connected undirected graph is a symmetric positive semi-definite matrix that has a simple $\lambda_1=0$ eigenvalue, and the rest of its eigenvalues satisfy $\lambda_1=0< \lambda_2\leq\cdots\leq\lambda_N$.  Moreover, $\vect{L}\vect{1}_N=\vect{0}$.

\section{Problem Definition}\label{sec::Prob_formu}
We study the accelerated average consensus problem over a connected undirected graph $\mathcal{G}(\VV,\EE,\vectsf{A})$. As stated earlier, algorithm~\eqref{eq::consensus-orig} is the well-known solution for the average consensus problem. The admissible stepsize for algorithm~\eqref{eq::consensus-orig} over a connected graph satisfies $\delta\in(0,\frac{2}{\lambda_N})$, for which algorithm~\eqref{eq::consensus-orig} 
converges exponentially fast to the average of the initial conditions of the agents~\cite{ROS-RMM:04}. 
The \emph{asymptotic convergence factor} 
for the Laplacian average consensus algorithm~\eqref{eq::consensus-orig} is $\mathfrak{r}_0=\max\{|1-\delta\lambda_2|,|1-\delta\lambda_N|\}$. For  $\delta\in(0,\frac{1}{\lambda_N}]$, given that $0<\lambda_2\leq\lambda_N$, $\mathsf{r}_0=|1-\delta\lambda_2|$. We can show that the exponential convergence rate of~\eqref{eq::consensus-orig} is equal to $\mathfrak{r}_0+\eps$ for an infinitesimally small $\eps\in\real_{>0}$. Given $\delta\in(0,\frac{1}{\lambda_N})$, if one wants to increase the rate of convergence of algorithm~\eqref{eq::consensus-orig} then the only possible mechanism is to decrease $\delta$, or in another word, increase the frequency of the communicated messages between the agents. The objective in this paper is to investigate algorithms that have provably faster convergence than the Laplacian average consensus algorithm~\eqref{eq::consensus-orig}.
Our first approach is to investigate using out-dated feedback in~\eqref{eq::consensus-orig}, i.e., using non-zero $\mathpzc{d}$ in~\eqref{eq::consensus-orig_dated}.
Our second approach is to cast the average consensus problem as a convex optimization problem and then seek faster converging algorithms using the first-order accelerated optimization algorithms.

\section{Accelerated average consensus via outdated agreement feedback}\label{sec::main}
In this section, we study convergence of algorithm~\eqref{eq::consensus-orig_dated} and determine for what values of $\mathpzc{d}\in\mathbb{Z}_{>0}$, this algorithm can converge faster than algorithm~\eqref{eq::consensus-orig}. According to~\cite{HM-SSK:18}, the modified average consensus algorithm~\eqref{eq::consensus-orig_dated} with $\delta\in(0,\frac{2}{\lambda_N})$ is guaranteed to converge when $\mathpzc{d}=1$. 
The results in~\cite{HM-SSK:18} go further to show the admissible range of $\mathpzc{d}\in\mathbb{Z}_{>0}$ for which~\eqref{eq::consensus-orig_dated} converges, see~\cite[Lemma III.4]{HM-SSK:18}. 
Next, we determine for what values of $\mathpzc{d}\in\{1,\cdots,\bar{\mathpzc{d}}\}$ the convergence of the modified algorithm~\eqref{eq::consensus-orig_dated} is faster than the convergence of the Laplacian average consensus algorithm. For convenience in our study, we implement the change of variable  $\vect{z}(k)=\vect{T}^\top\vect{x}(k)$ 
to write~\eqref{eq::consensus-orig} in the following equivalent form
\begin{subequations}\label{eq::laclacian_equivalent}
\begin{align}
    {z}_1(k+1)&=z_1(k),\quad~ {z}_1(0)=\mathsf{r}^{\text{avg}}=\frac{1}{\sqrt{N}}\sum\nolimits_{j=1}^N\mathsf{r}^j,\label{eq::laclacian_equivalent_z1}\\
z_i(k+1)&\!=z_i(k)\!-\delta\lambda_i z_i(k-\mathpzc{d}),\label{eq::laclacian_equivalent_z2} \\
z_{i}(0)&=[\vect{T}^\top\vect{x}(0)]_i, \quad z_{i}(k)=0~~~ k\in\{-\mathpzc{d},\cdots,-1\}, \nonumber
\end{align}
\end{subequations}
for $i\in\{2,\cdots,N\}$, where \begin{align}\label{eq::T}\vect{T}=\begin{bmatrix}\vect{v}_1&\vect{R}\end{bmatrix},\quad \vect{R}=\begin{bmatrix}\vect{v}_2&\cdots&\vect{v}_N\end{bmatrix}
\end{align}
with $\vect{v}_1=\frac{1}{\sqrt{N}}\vect{1}_N,\vect{v}_2,\cdots,\vect{v}_N$ being the normalized eigenvectors of $\vect{L}$.
Note that $\vect{T}^{-1}=\vect{T}^\top$ and $\vect{T}^\top\vect{L}\vect{T}=\vect{\Lambda}=\Diag{0,\lambda_2,\cdots,\lambda_N}$ because  $\vect{L}$ of a connected undirected graph is a symmetric and real matrix, thus its eigenvectors are mutually orthogonal. Let $\vect{z}_{2:N}=(z_2,\cdots,z_N)^\top$. 
Given~\eqref{eq::laclacian_equivalent_z1}, it follows from   $\vect{z}(k)=\vect{T}^{\top}\,\vect{x}(k)$ that
\begin{align*}\lim_{t\to\infty}\vect{x}(k)=&\,\frac{1}{\sqrt{N}}\lim_{k\to\infty}z_1(k)\vect{1}_N+\lim_{t\to\infty}\vect{R}\,\vect{z}_{2:N}(k)\\=&\,\mathsf{r}^{\text{avg}}\vect{1}_N+\vect{R}\lim_{k\to\infty}\vect{z}_{2:N}(k).
\end{align*}
Therefore, the correctness and the convergence factor of the average consensus algorithm~\eqref{eq::consensus-orig} are determined, respectively, by asymptotic stability and the worst convergence factor of the scalar dynamics in~\eqref{eq::laclacian_equivalent_z2}. 
Given~\eqref{eq::laclacian_equivalent}, the convergence factor of the average consensus algorithm~\eqref{eq::consensus-orig_dated} is 
\begin{align}
    \mathfrak{r}_\mathpzc{d}=\max \{\mathfrak{r}_{\mathpzc{d},i}\}_{i=2}^N,
\end{align}
where $\mathfrak{r}_{\mathpzc{d},i}$ is the convergence factor of scalar dynamics~\eqref{eq::laclacian_equivalent_z2}, $i\in\{2,\cdots,N\}$. For each scalar dynamics~\eqref{eq::laclacian_equivalent_z2}, for $i\in\{2,\cdots,N\}$, we have  
$\mathfrak{r}_{\mathpzc{d},i}=\max\{|s^i|\,|\, s^i\in\mathcal{S}\}$, where $\mathcal{S}$ is the set of roots that is determined by 
 the characteristic equation of the scalar dynamics~\eqref{eq::laclacian_equivalent_z2} given  by
\begin{align}\label{eq::char-lambda-i}
    \mathcal{T}(s)=s^{\mathpzc{d}+1}-s^\mathpzc{d}+\delta \lambda_i,\quad i\in\{2,\cdots,N\}.
\end{align}
The roots of~\eqref{eq::char-lambda-i} are all simple except when $\lambda_i=\frac{\mathpzc{d}^\mathpzc{d}}{\delta(\mathpzc{d}+1)^{\mathpzc{d}+1}}$ \cite{SAK:94}. The roots of $\mathcal{T}(s)$ and consequently the size of $\mathfrak{r}_{\mathpzc{d},i}$ depend on $\mathpzc{d}$ and $\delta \lambda_i$. The following result, whose proof is given in the appendix, specifies the values of $\mathpzc{d}>0$ for which the convergence factor of~\eqref{eq::consensus-orig_dated} is less than of~\eqref{eq::consensus-orig}. Recall that smaller convergence factor means faster convergence. 

\begin{thm}[The range of $\mathpzc{d}$ for which the convergence factor of \eqref{eq::consensus-orig_dated} is less than of~\eqref{eq::consensus-orig}]\label{lem::beta-dis_scalar}
Let the communication graph be undirected and connected. For any $\delta \in(0,\frac{2}{\lambda_N})$ the convergence factor of~\eqref{eq::consensus-orig_dated} is less than that of~\eqref{eq::consensus-orig} if
\begin{align}\label{eq::buffer_bound_dis_rate}
  \mathpzc{d}<\min\left\{\frac{\textup{ln}(\frac{\delta\lambda_i}{\sqrt{(1-\delta\lambda_i)^2+1-2|1-\delta\lambda_i|\cos{\phi}}})}{\textup{ln}(|1-\delta\lambda_i|)},\frac{|1-\delta\lambda_i|}{1-|1-\delta\lambda_i|}\right\}, 
\end{align}
where $\phi\in[0,\frac{\pi}{\mathpzc{d}+1}]$ is the solution of $\frac{\sin{\mathpzc{d}\phi}}{\sin{(\mathpzc{d}+1)\phi}}=\frac{1}{|1-\delta\lambda_i|}$ for all $i\in\{2,\cdots,N\}$. Moreover, the convergence factor is a decreasing function of $\mathpzc{d}\in\mathbb{Z}_{>0}$ if $\frac{\mathpzc{d}^\mathpzc{d}}{(\mathpzc{d}+1)^{\mathpzc{d}+1}}<\{\delta\lambda_i\}_{i=2}^{N}$ holds. \boxend
\end{thm}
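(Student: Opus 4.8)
The plan is to push everything through the scalar modal reduction \eqref{eq::laclacian_equivalent_z2} and to bound, for each $i\in\{2,\dots,N\}$ separately, the modulus of the dominant root of the characteristic polynomial \eqref{eq::char-lambda-i}. Since $\mathfrak{r}_\mathpzc{d}=\max_i\mathfrak{r}_{\mathpzc{d},i}$ and $\mathfrak{r}_0=\max_i|1-\delta\lambda_i|$, it suffices to prove the per-mode bound $\mathfrak{r}_{\mathpzc{d},i}<|1-\delta\lambda_i|$: then $\mathfrak{r}_{\mathpzc{d},i}<|1-\delta\lambda_i|\le\mathfrak{r}_0$ for every $i$, so $\mathfrak{r}_\mathpzc{d}<\mathfrak{r}_0$. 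Writing a root of \eqref{eq::char-lambda-i} as $s=r\,\ee^{\ii\phi}$ and splitting $s^{\mathpzc{d}}(s-1)=-\delta\lambda_i$ into its imaginary and modulus parts gives the two relations $r\sin((\mathpzc{d}+1)\phi)=\sin(\mathpzc{d}\phi)$ and $r^{\mathpzc{d}}\sqrt{r^2-2r\cos\phi+1}=\delta\lambda_i$; eliminating the radical through the first identity yields the compact form $\delta\lambda_i=r^{\mathpzc{d}}\,\sin\phi/\sin((\mathpzc{d}+1)\phi)$ with $r(\phi)=\sin(\mathpzc{d}\phi)/\sin((\mathpzc{d}+1)\phi)$. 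These are the objects on which the whole argument turns.

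Next I would establish the root-structure facts that make the dominant root identifiable. Using the simplicity statement below \eqref{eq::char-lambda-i} together with \cite{SAK:94,ISL:05}, I would argue that over the admissible range the modulus-dominant root is the complex conjugate pair lying on the principal branch $\phi\in(0,\tfrac{\pi}{2\mathpzc{d}+1})$, and that along this branch $r(\phi)$ is strictly increasing from its $\phi\to 0^{+}$ value $\tfrac{\mathpzc{d}}{\mathpzc{d}+1}$. In particular the smallest modulus attainable on the branch is $\tfrac{\mathpzc{d}}{\mathpzc{d}+1}$, so $\mathfrak{r}_{\mathpzc{d},i}<|1-\delta\lambda_i|$ can hold only if $\tfrac{\mathpzc{d}}{\mathpzc{d}+1}<|1-\delta\lambda_i|$, which rearranges exactly to the second entry $\mathpzc{d}<|1-\delta\lambda_i|/(1-|1-\delta\lambda_i|)$ of the minimum in \eqref{eq::buffer_bound_dis_rate}. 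This is where that term enters.

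With the monotonicity of $r(\phi)$ in hand, I would convert the target inequality $r(\phi^{\star})<|1-\delta\lambda_i|$ into a threshold on $\delta\lambda_i$. Letting $\phi$ denote the branch argument tied to the radius $|1-\delta\lambda_i|$ (the solution of the stated trigonometric equation), and noting that $\delta\lambda_i=r(\phi)^{\mathpzc{d}}\sin\phi/\sin((\mathpzc{d}+1)\phi)$ is itself monotone along the branch, the actual root argument $\phi^{\star}$ satisfies $r(\phi^{\star})<|1-\delta\lambda_i|$ precisely when $\delta\lambda_i$ lies below that right-hand side evaluated at $\phi$, i.e. $\delta\lambda_i<|1-\delta\lambda_i|^{\mathpzc{d}}\sqrt{(1-\delta\lambda_i)^2+1-2|1-\delta\lambda_i|\cos\phi}$, where I use $(1-\delta\lambda_i)^2=|1-\delta\lambda_i|^2$ and the locus identity $\sqrt{r^2-2r\cos\phi+1}=\sin\phi/\sin((\mathpzc{d}+1)\phi)$ at $r=|1-\delta\lambda_i|$. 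Taking logarithms and dividing by $\ln|1-\delta\lambda_i|<0$ turns this into the first entry of the minimum in \eqref{eq::buffer_bound_dis_rate}; intersecting the two conditions and minimizing over $i$ delivers the claim.

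For the monotonicity addendum I would use that $\delta\lambda_i=\tfrac{\mathpzc{d}^{\mathpzc{d}}}{(\mathpzc{d}+1)^{\mathpzc{d}+1}}$ is exactly the double-root (bifurcation) value below \eqref{eq::char-lambda-i} at which the branch meets the real axis. On the side of this threshold where the dominant root is the real root $r_{+}\in(\tfrac{\mathpzc{d}}{\mathpzc{d}+1},1)$ of $r^{\mathpzc{d}}(1-r)=\delta\lambda_i$, implicit differentiation of this identity in $\mathpzc{d}$ gives $r_{+}'\big(\tfrac{\mathpzc{d}}{r_{+}}-\tfrac{1}{1-r_{+}}\big)=-\ln r_{+}$; since $-\ln r_{+}>0$ and the bracket is negative for $r_{+}>\tfrac{\mathpzc{d}}{\mathpzc{d}+1}$, the sign of $r_{+}'$ is fixed, so $\mathfrak{r}_{\mathpzc{d},i}=r_{+}$ varies monotonically with $\mathpzc{d}$, giving the stated monotonicity once one takes the governing mode over $i$.

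I expect the main obstacle to be the root-structure step: rigorously certifying that the principal-branch conjugate pair is the modulus-dominant root of \eqref{eq::char-lambda-i} across the whole admissible range (ruling out the higher branches and the real root as competitors), together with the global monotonicity of $r(\phi)$ and of $r(\phi)^{\mathpzc{d}}\sin\phi/\sin((\mathpzc{d}+1)\phi)$ in $\phi$. Once those are secured, the remainder is a single-variable monotonicity argument followed by the algebraic rearrangement into \eqref{eq::buffer_bound_dis_rate}.
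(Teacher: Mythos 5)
Your overall strategy is the paper's: reduce to the modal scalar recursions \eqref{eq::laclacian_equivalent_z2}, demand that every root of $s^{\mathpzc{d}+1}-s^{\mathpzc{d}}+\delta\lambda_i=0$ lie in the disk of radius $|1-\delta\lambda_i|$, and translate that into the two entries of the minimum in \eqref{eq::buffer_bound_dis_rate}. The difference is in how the root-location fact is obtained. The paper simply instantiates the cited Kuruklis result (Lemma A.1 in the appendix, from~\cite{SAK:94}) with $|a|=1/|1-\delta\lambda_i|$ and $c=\delta\lambda_i$, and the whole proof is then the logarithmic rearrangement you also perform. You instead set out to re-derive that lemma from scratch via the root locus $s=r\ee^{\ii\phi}$; your locus algebra is correct (the identity $\sin^2(\mathpzc{d}\phi)+\sin^2((\mathpzc{d}+1)\phi)-2\sin(\mathpzc{d}\phi)\sin((\mathpzc{d}+1)\phi)\cos\phi=\sin^2\phi$ does give $\sqrt{r^2-2r\cos\phi+1}=\sin\phi/\sin((\mathpzc{d}+1)\phi)$ on the branch $r=\sin(\mathpzc{d}\phi)/\sin((\mathpzc{d}+1)\phi)$), and your two final inequalities coincide with the paper's \eqref{eq::dis_scal_proof_a}--\eqref{eq::dis_scal_proof_b}. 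But the step you yourself flag as the ``main obstacle'' --- certifying that the principal-branch conjugate pair is modulus-dominant over the admissible range and that $r(\phi)$ and $r(\phi)^{\mathpzc{d}}\sin\phi/\sin((\mathpzc{d}+1)\phi)$ are monotone along it --- is precisely the content of the cited lemma. Left unproved, it is not a technicality but the entire analytic core; either carry it out or do what the paper does and invoke~\cite{SAK:94} explicitly.

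The monotonicity addendum is where your argument actually goes wrong rather than merely stopping short. Under the stated hypothesis $\frac{\mathpzc{d}^{\mathpzc{d}}}{(\mathpzc{d}+1)^{\mathpzc{d}+1}}<\delta\lambda_i$ there is no real root $r_{+}\in(\frac{\mathpzc{d}}{\mathpzc{d}+1},1)$ of $r^{\mathpzc{d}}(1-r)=\delta\lambda_i$: the function $r\mapsto r^{\mathpzc{d}}(1-r)$ attains its maximum $\frac{\mathpzc{d}^{\mathpzc{d}}}{(\mathpzc{d}+1)^{\mathpzc{d}+1}}$ on $(0,1)$ at $r=\frac{\mathpzc{d}}{\mathpzc{d}+1}$, so for $\delta\lambda_i$ above that threshold the dominant roots are the complex pair and your implicit differentiation of the real-root identity in $\mathpzc{d}$ addresses a root that does not exist in this regime. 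The paper obtains the addendum from the second half of Lemma A.1 (behaviour of $|s^1|$ relative to the double-root value $c=\frac{\mathpzc{d}^{\mathpzc{d}}}{(\mathpzc{d}+1)^{\mathpzc{d}+1}}$); if you want to keep a self-contained derivation you would need to differentiate along the complex branch, i.e., track how the dominant modulus $r(\phi)$ at the solution of $\delta\lambda_i=r(\phi)^{\mathpzc{d}}\sin\phi/\sin((\mathpzc{d}+1)\phi)$ varies with $\mathpzc{d}$, not the real root.
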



\section{Accelerated average consensus via first-order accelerated optimization algorithms} \label{sec::acc}
In this section, we use the first-order accelerated optimization framework to devise accelerated average consensus algorithms that have proven faster convergence than the well-known average consensus algorithm~\eqref{eq::consensus-orig}. The work in this section is inspired by the results in~\cite{JB-MF-MM:18}.~\cite{JB-MF-MM:18} argued that the conventional average consensus algorithm~\eqref{eq::consensus-orig} can be viewed as the gradient descent algorithm with fixed stepsize $\delta\in(0,\frac{2}{\lambda_N})\subset\real_{>0}$ where the cost function is the agreement potential $f(\vect{x})=\frac{1}{2}\vect{x}^\top\vect{L}\vect{x}$. Note here that $\vect{0}\leq \nabla^2 f(\vect{x})=\vect{L}\leq \lambda_N\vect{I}$. Based on this observation and since the cost function~$f(\vect{x})=\frac{1}{2}\vect{x}^\top\vect{L}\vect{x}$ is convex,~\cite{JB-MF-MM:18} proposes to use the first-order accelerated NAG-C optimization algorithm  
\begin{subequations} \label{eq::ref15}
\begin{align}
    \vect{y}(k+1)&=\vect{x}(k)-\delta\nabla f(\vect{x}(k)),\\
    \vect{x}(k+1)&=\vect{y}(k+1)+\frac{k+1}{k+3}(\vect{y}(k+1)-\vect{y}(k)),
\end{align}
\end{subequations}
with $\vect{x}(0)=\vect{y}(0)=\vectsf{r}$, $\vectsf{r}=[\mathsf{r}^1,\cdots,\mathsf{r}^N]^{\top}$,  where $\nabla f(\vect{x}(k))=\vect{L}\vect{x}(k)$. The choice of coefficient $\frac{k+1}{k+3}$, which tends to one, is fundamental for the argument used by Nesterov to establish the following inverse quadratic convergence rate of $f(\vect{x}(k))-f(\vect{x}^\star)\leq O\left(\frac{1}{\delta \,k^2}\right)$, for any stepsize $0<\delta\leq 1/\lambda_N$, with the best step size being $\delta=\frac{1}{\lambda_N}$. 

For a $m-$strongly convex objective $f$ with a $L-$Lipschitz gradient, i.e., $m\vect{I}\leq \nabla^2 f(\vect{x})\leq L\vect{I}$, the NAG-SC algorithm achieves a faster linear convergence of $f(\vect{x}(k))-f(\vect{x}^\star)\leq O((1-\sqrt{\delta m})^k)$ for any stepsize of $\delta\in(0,\frac{1}{L}]$ with the best rate being achieved at $\delta=\frac{1}{L}$. The fastest accelerated globally convergent gradient-based algorithm for a $m-$strongly convex objective $f$ with $L-$Lipschitz gradients however, is the TM algorithm proposed in~\cite{BVS-RAF-KML:18}, which achieves $f(\vect{x}(k))-f(\vect{x}^\star)\leq O\left((1-\sqrt{\frac{m}{L}})^{2k}\right)$. Building on the structure of these two optimization algorithms, we propose the following TM-based accelerated average consensus algorithm
\begin{subequations} \label{eq::TM}
\begin{align}
    \vect{\xi}(k+1)&=(1+\beta)\vect{\xi}(k)-\beta\vect{\xi}(k-1)-\alpha\vect{L}\vect{y}(k),\\
    \vect{y}(k)&=(1+\gamma)\vect{\xi}(k)-\gamma\vect{\xi}(k-1),\\
    \vect{x}(k)&=(1+\delta)\vect{\xi}(k)-\delta\vect{\xi}(k-1),
\end{align}
\end{subequations}
where $(\alpha,\beta,\gamma,\delta)=\Big(\frac{1+\rho}{\lambda_{N}},\frac{\rho^2}{2-\rho},\frac{\rho^2}{(1+\rho)(2-\rho)},\frac{\rho^2}{1-\rho^2}\Big)$, $\rho~=~1-\sqrt{\frac{\lambda_{2}}{\lambda_{N}}}$, $\vect{\xi}(0)=\vect{\xi}(1)=\vectsf{r}$, and the NAG-SC-based  accelerated average consensus algorithm
\begin{subequations} \label{eq::NAG-SC}
\begin{align}
    \vect{x}(k+1)&=\vect{y}(k)-\alpha\vect{L}\vect{y}(k),\\
    \vect{y}(k)&=(1+\beta)\vect{x}(k)-\beta\vect{x}(k-1),
\end{align}
\end{subequations}
with $(\alpha,\beta)=\Big(\frac{1}{\lambda_{N}},\frac{\sqrt{\lambda_{N}}-\sqrt{\lambda_{2}}}{\sqrt{\lambda_{N}}+\sqrt{\lambda_{2}}}\Big)$, $\vect{x}(0)=\vect{x}(1)=\vectsf{r}$. In the following, we prove the convergence of $x^i\to\frac{1}{N}\sum_{i=1}^{N}\mathsf{r}^i$ as $k\to\infty$ for the TM-based algorithm~\eqref{eq::TM}, and omit the proof of the NAG-SC algorithm, since a similar approach can be applied. 

Consider the change of variable $\begin{bmatrix}w_{1} \quad \vect{w}^{\top}_{2:N}\end{bmatrix}^{\top}=\vect{T}^\top\vect{\xi}$, $\begin{bmatrix}q_{1} \quad \vect{q}^{\top}_{2:N}\end{bmatrix}^{\top}=\vect{T}^\top\vect{y}, \begin{bmatrix}p_{1} \quad \vect{p}^{\top}_{2:N}\end{bmatrix}^{\top}=\vect{T}^\top\vect{x},$
where $\vect{T}$ is~\eqref{eq::T}. Then,~\eqref{eq::TM} can be written in an equivalent form
\begin{subequations}\label{eq::TMequ}
\begin{align}
    w_{1}(k+1)&=(1+\beta)w_{1}(k)-\beta w_{1}(k-1), \label{eq::TMequ-a}\\
    q_1(k)&=(1+\gamma)w_1(k)-\gamma w_1(k-1), \label{eq::TMequ-b} \\
    p_1(k)&=(1+\delta)w_1(k)-\delta w_1(k-1), \label{eq::TMequ-c} \\
    \vect{w}_{2:N}(k+1)&=(1+\beta)\vect{w}_{2:N}(k)-\beta \vect{w}_{2:N}(k-1) \nonumber \\
    &-\alpha\vect{L}^{+}\vect{y}_{2:N}(k), \label{eq::TMequ-d}\\
    \vect{p}_{2:N}(k)&=(1+\gamma)\vect{w}_{2:N}(k)-\gamma \vect{w}_{2:N}(k-1), \label{eq::TMequ-e} \\
    \vect{q}_{2:N}(k)&=(1+\delta)\vect{w}_{2:N}(k)-\delta \vect{w}_{2:N}(k-1), \label{eq::TMequ-f},
\end{align}
\end{subequations}
where $\vect{L}^{+}=\vect{R}^{\top}\vect{L}\vect{R}$. The following theorem proves that~\eqref{eq::TM} is a solution for the average consensus problem. The reason that we can use the TM and NAG-SC algorithms to design our accelerated average consensus algorithms reveals itself in the proof of this theorem.  
\begin{thm}\label{thm::TM_based_Convereg}
    Consider a network of $N$ agents communicating over a connected graph. Let the agents of the network implement algorithm~\eqref{eq::TM}. Then, $x^i\to\frac{1}{N}\sum_{i=1}^{N}\mathsf{r}^i$ as $k\to\infty$.
\end{thm}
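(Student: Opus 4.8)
The plan is to exploit the same diagonalizing change of variables $\vect{z}=\vect{T}^\top\vect{x}$ that was used for the Laplacian analysis, so that the coupled dynamics~\eqref{eq::TM} decompose into the scalar system~\eqref{eq::TMequ}. The key structural observation is that $\vect{T}^\top\vect{L}\vect{T}=\Diag{0,\lambda_2,\dots,\lambda_N}$, so the mode associated with the eigenvalue $0$ (the $w_1,q_1,p_1$ components) decouples completely from the forcing term $\alpha\vect{L}\vect{y}$, while the remaining modes $\vect{w}_{2:N}$ are driven by $\vect{L}^+=\vect{R}^\top\vect{L}\vect{R}=\Diag{\lambda_2,\dots,\lambda_N}$, which is symmetric positive definite. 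This is precisely why the strongly-convex machinery applies: restricted to the subspace orthogonal to $\vect{1}_N$, the agreement potential $f(\vect{x})=\tfrac12\vect{x}^\top\vect{L}\vect{x}$ has Hessian bounded below by $\lambda_2\vect{I}$ and above by $\lambda_N\vect{I}$, i.e.\ it is $\lambda_2$-strongly convex with $\lambda_N$-Lipschitz gradient on that subspace. I would state this decomposition explicitly as the conceptual heart of the proof.

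Next I would handle the two pieces separately. For the consensus (zero-eigenvalue) mode, I would show that $w_1(k)$ is constant and equals its initial value. Since $\vect{\xi}(0)=\vect{\xi}(1)=\vectsf{r}$, we have $w_1(0)=w_1(1)=[\vect{v}_1^\top\vectsf{r}]=\tfrac{1}{\sqrt N}\sum_{j=1}^N\mathsf{r}^j$, and the recursion~\eqref{eq::TMequ-a} is $w_1(k+1)=(1+\beta)w_1(k)-\beta w_1(k-1)$, whose fixed point with equal initial data forces $w_1(k)=\tfrac{1}{\sqrt N}\sum_{j=1}^N\mathsf{r}^j$ for all $k$; consequently $p_1(k)$ from~\eqref{eq::TMequ-c} equals the same value. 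For the transverse modes $\vect{w}_{2:N}$, I would invoke the convergence guarantee of the TM algorithm from~\cite{BVS-RAF-KML:18}: with the chosen parameters $(\alpha,\beta,\gamma,\delta)$ tuned to $m=\lambda_2$ and $L=\lambda_N$, and $\rho=1-\sqrt{\lambda_2/\lambda_N}$, the TM iteration applied to the strongly-convex quadratic $\tfrac12\vect{z}_{2:N}^\top\vect{L}^+\vect{z}_{2:N}$ converges to its unique minimizer $\vect{0}$, so $\vect{p}_{2:N}(k)\to\vect{0}$ as $k\to\infty$.

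Finally I would reassemble the original coordinates. Writing $\vect{x}=\vect{T}\,[p_1\;\;\vect{p}_{2:N}^\top]^\top=\tfrac{1}{\sqrt N}p_1\vect{1}_N+\vect{R}\,\vect{p}_{2:N}$ and passing to the limit, the second term vanishes and the first gives $\lim_{k\to\infty}\vect{x}(k)=\tfrac{1}{\sqrt N}\big(\tfrac{1}{\sqrt N}\sum_{j=1}^N\mathsf{r}^j\big)\vect{1}_N=\big(\tfrac1N\sum_{j=1}^N\mathsf{r}^j\big)\vect{1}_N$, i.e.\ every $x^i\to\tfrac1N\sum_{i=1}^N\mathsf{r}^i$, as claimed.

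The main obstacle I anticipate is not the consensus-mode argument, which is elementary, but rigorously justifying that algorithm~\eqref{eq::TM} really is the TM iteration of~\cite{BVS-RAF-KML:18} applied to $f(\vect{x})=\tfrac12\vect{x}^\top\vect{L}\vect{x}$ with the correct identification $m=\lambda_2$, $L=\lambda_N$. The subtlety is that $f$ is only positive \emph{semi}definite on all of $\real^N$ (it is not globally strongly convex because of the null direction $\vect{1}_N$), so one cannot directly cite the black-box TM rate on the full space. The decomposition is essential here: strong convexity holds only after restricting to $\operatorname{range}(\vect{R})$, where $\nabla^2 f$ acts as $\vect{L}^+\succeq\lambda_2\vect{I}$. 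I would therefore be careful to verify that the TM parameter formulas evaluated with condition-number data $(\lambda_2,\lambda_N)$ match the template in~\cite{BVS-RAF-KML:18}, and that the gradient $\nabla f(\vect{y})=\vect{L}\vect{y}$ transforms under $\vect{T}^\top$ precisely into the block-diagonal form appearing in~\eqref{eq::TMequ-d}, so that the transverse subsystem is genuinely a TM run on the nondegenerate quadratic.
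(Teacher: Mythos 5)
Your proposal is correct and follows essentially the same route as the paper's proof: diagonalize via $\vect{T}$, observe that the zero-eigenvalue mode stays fixed at $\tfrac{1}{\sqrt{N}}\sum_j \mathsf{r}^j$ while the transverse block is a genuine TM iteration on the $\lambda_2$-strongly convex quadratic $\tfrac{1}{2}\vect{z}_{2:N}^\top\vect{L}^{+}\vect{z}_{2:N}$, then reassemble. If anything, you are slightly more careful than the paper about the $1/\sqrt{N}$ normalization of the consensus mode and about noting that strong convexity holds only on $\operatorname{range}(\vect{R})$, both of which the paper glosses over.
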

\begin{proof}
Let us consider the equivalent form of the TM algorithm in~\eqref{eq::TMequ}. From \eqref{eq::TMequ-a}-\eqref{eq::TMequ-c}, it is trivially concluded that $q_1(k)=p_1(k)=w_1(0)$ for $k\in\mathbb{Z}_{>0}$. On the other hand, since $\vect{L}^{+}$ is a positive definite matrix with eigenvalues $\lambda_2,\cdots,\lambda_N$, by virtue of the TM algorithm of~\cite{JB-MF-MM:18}, \eqref{eq::TMequ-d}-\eqref{eq::TMequ-f} minimize the $\lambda_2$-strongly convex function $f(\vect{p}_{2:N})=\frac{1}{2}\vect{p}_{2:N}\vect{L}^{+}\vect{p}_{2:N}$ with $\lambda_N$-Lipschitz gradient to the optimal point $\vect{p}^{\star}_{2:N}=\vect{0}$ with a rate of convergence of $f(\vect{p}_{2:N}(k))-f(\vect{0})\leq O\left(\left(1-\sqrt{\frac{\lambda_2}{\lambda_N}}\right)^{2k}\right)$. Therefore, as $k\to\infty$, $\vect{p}_{2:N}\to\vect{0}$. Considering the change of variables, we know that $q_1(0)=\frac{1}{N}\sum_{i=1}^{N}\mathsf{r}^i$ and thus, $x^i\to\frac{1}{N}\sum_{i=1}^{N}\mathsf{r}^i$ as $k\to\infty$.
\end{proof}

We showed in the result above that the algorithm presented in~\eqref{eq::TM} solves the average consensus problem. A Similar result can be established for~\eqref{eq::NAG-SC}. Based on the developments in~\cite{JB-MF-MM:18}, it is proved that \eqref{eq::ref15} converges to the average of local reference values asymptotically and faster than the popular solution~\eqref{eq::consensus-orig}. Moreover, from~\cite{BVS-RAF-KML:18}, we know that the TM algorithm benefits the fastest exponential convergence rate among other accelerated first-order gradient methods in optimization, e.g.,~\eqref{eq::ref15}. We illustrate this comparison in a numerical example in the following section with also simulating the algorithm in~\cite{TCA-BNO-MJC:08}.

Similar to~\cite{TCA-BNO-MJC:08}, we note that faster convergence in~\eqref{eq::TM} and \eqref{eq::NAG-SC} comes with requiring the agents to know $\lambda_2$ and $\lambda_N$ globally in order to compute $(\alpha,\beta,\gamma,\delta)$. The knowledge of $\lambda_N$ to choose the stepsize $\delta$ is universal among all discrete-time average consensus algorithms, including the Laplacian algorithm~\eqref{eq::consensus-orig}. Such knowledge is especially useful in choosing the best stepsize for the fastest convergence. In practice, instead of $\lambda_N$, often its upper-bound $\bar{\lambda}_N=2\mathsf{d}_{\max}$, which is easier to compute, is  used~\cite{ROS-JAF-RMM:07}. On the other hand, $\lambda_2$  can be either computed through a dedicated distributed algorithm, see, e.g., \cite{yang2010decentralized}, or replaced by a lower bound such as $\underbar{$\lambda$}_2=\frac{4}{N \textup{diam}(\mathcal{G})}$, see e.g., \cite{mohar1991eigenvalues}.


\section{Performance demonstration in distributed Gaussian Mixture Model (GMM) Estimation 
}\label{sec::Num_ex}
Average consensus is an important tool to enable many distributed schemes in networked systems. To demonstrate the benefit of using our accelerated average consensus, we conduct a simulation study of a distributed expectation-maximization (EM) algorithm used in sensor networks to obtain a Gaussian Mixture Model (GMM) of a set of observed targets. In our case study, the setting consists of $N$ agents observing the location $\vectsf{p}\in\real^2$ of $M$ targets in a 2D plane. The agents want to collaboratively obtain the GMM model of the distribution of the targets, i.e., obtain the weight, mean and covariance of basis of the GMM model, i.e., $(\pi_l,\vect{\mu}_l,\Sigma_l)$ in $\hat{f}(\vectsf{p})=\sum_{l=1}^{N_s}\pi_l\mathcal{N}(\vectsf{p}|\vect{\mu}_{l},\vect{\Sigma}_l)$, where $N_s$ is the number of the bases of the GMM model which is known to all agents. A popular method to construct a GMM with a determined number of bases $N_s$ from observed data is the EM algorithm~\cite{dempster1977maximum} which is an iterative method that alternates between an expectation (E) step and a maximization (M) step. In the E-step, the posterior probability is computed as
\begin{align}\label{eq::posterior}
    \zeta_{ln}:=\textup{Pr}(z=l|\vectsf{p}_n)=\frac{\pi_{l}\mathcal{N}(\vectsf{p}_n|\vect{\mu}_{l},\vect{\Sigma}_l)}{\sum_{j=1}^{N_s}\pi_{j}\mathcal{N}(\vectsf{p}_n|\vect{\mu}_{j},\vect{\Sigma}_j)}
\end{align}
using the target points $\vectsf{p}_n$, $n\in\{1,\cdots,M\}$ and the current values of $\{\pi_{l},\vect{\mu}_{l},\vect{\Sigma}_l\}_{l=1}^{N_s}$. Next, in the M-step, the parameters of each $l\in\{1,\cdots,N_s\}$ are updated by
\begin{subequations}\label{eq::update}
\begin{align}
    \pi_l&=\frac{\sum_{n=1}^{M}\zeta_{ln}}{M}, \label{eq::updateA} \\
    \vect{\mu}_{l}&=\frac{\sum_{n=1}^{M}\zeta_{ln}\vectsf{p}_n}{\sum_{n=1}^{M}\zeta_{ln}}, \\
    \vect{\Sigma}_l&=\frac{\sum_{n=1}^{M}\zeta_{ln}(\vectsf{p}_n-\vect{\mu}_l)(\vectsf{p}_n-\vect{\mu}_l)^{\top}}{\sum_{n=1}^{M}\zeta_{ln}},
\end{align}
\end{subequations}
using the current values of $\zeta_{ln}$. In the distributed EM algorithm, each agent only observes a $M^i$ number of targets; the set is given by $\mathcal{M}^i\subset\{1,\cdots,M\}$ where $\cup_{j=1}^N\mathcal{M}^j=\{1,\cdots,M\}$, $\mathcal{M}^i\cap\mathcal{M}^j=\{\}$. The distributed EM algorithms assume that each agent has a local copy of the GMM parameters~\cite{valdeira2022decentralized, altilio2019distributed}. Each agent $i\in\{1,\cdots,N\}$ performs the E-step in~\eqref{eq::posterior} locally using its own GMM parameters for $n\in\mathcal{M}^i$. However, the summation terms in~\eqref{eq::update} are fragmented among the agents. Therefore, agents use a set of three average consensus algorithms to compute the summation terms that appear in the M-step~\eqref{eq::update}.

In our numerical example, the number of agents is $N=20$, and the agents communicate over a ring graph. These agents observe $M=1000$ target points in a rectangle area of $(-80, 80)\times (-60, 60)$. The target points are drawn from a GMM model with $N_s=12$ so that we can check the accuracy of the estimated GMM via distributed EM algorithms against this true model. Each agent initializes its $\{\pi_{l},\vect{\mu}_{l},\vect{\Sigma}_l\}_{l=1}^{N_s}$ locally. Let us denote $T_{\textup{EM}}~=~10$ as the number of iterations in the EM algorithm, and $T_{\textup{consensus}}$ as the number of the consensus steps performed in each iteration of the M-step.
First, we compare the performance of the EM algorithm when it uses the Laplacian algorithm in~\eqref{eq::consensus-orig} (Laplacian-based EM) vs. when it uses our proposed TM-based algorithm~\eqref{eq::TM}. We conduct a set of four simulations for  $T_{\textup{consensus}}\in\{8,15,30,50\}$. When using the TM algorithm, we consider two cases. In one, we assume that the agents know $\lambda_2$ and $\lambda_N$ to compute the parameters of the TM algorithm (referred to as TM-based EM (exact)), and in the other case, we assume that agents replace $\lambda_2=\frac{4}{N \textup{diam}(\mathcal{G})}$ by its lower bound and $\lambda_N$ with its upper bound $2\mathsf{d}_{\max}$ (referred as TM-based EM (via bounds)). 
Due to the limited space, we only show the results generated by agent 1 in all the simulations; the other agents have similar results. Fig.~\ref{fig::GMM} depicts the 3$\sigma$-plot of the bases of the estimated GMM for $T_{\textup{consensus}}=8$. Here, the thin gray, the thin colored, and the thick colored ellipses represent, respectively, the true GMM model, the estimated GMM model using the Laplacian-based EM, and the estimated GMM model using the TM-based EM (via bounds). As seen in Fig.~\ref{fig::GMM}, the results generated by the TM-based EM (via bounds) are closer to the true model, especially in some bases, such as the top right purple and the bottom center magenta one. The results for TM-based EM (exact) are not shown to reduce clutter in Fig.~\ref{fig::GMM}. To better show the accuracy of each estimated GMM model, Fig.~\ref{fig::Loglikelihood} depicts the maximum Log-likelihood of the distributed EM algorithms in comparison to the central EM. By using the same number of communications, the TM-based EM algorithms, even when we use the bounds instead of the exact values for $\lambda_2$ and $\lambda_N$ achieve better results compared to the Laplacian algorithm in the sense that the maximum log-likelihood of the TM-based estimates are closer to the central solution. This difference is especially larger in cases where the number of communications is limited, e.g., $T_{\textup{consensus}}=8$.
\begin{figure}[h]
    \centering
    \subfloat[]{\includegraphics[trim= 1pt 5pt 0 0 ,clip,width=1\linewidth]{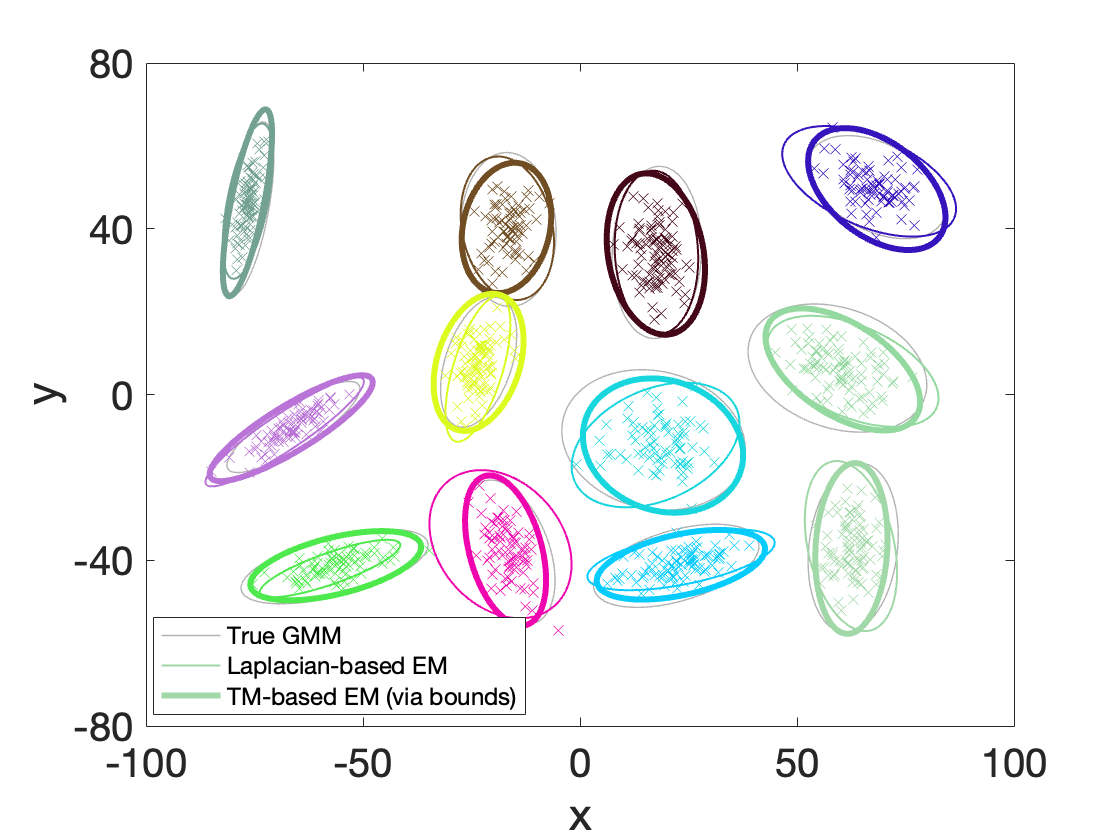}\label{fig::GMM}}\\
    \subfloat[]{\includegraphics[trim= 1pt 5pt 0 0 ,clip,width=1\linewidth]{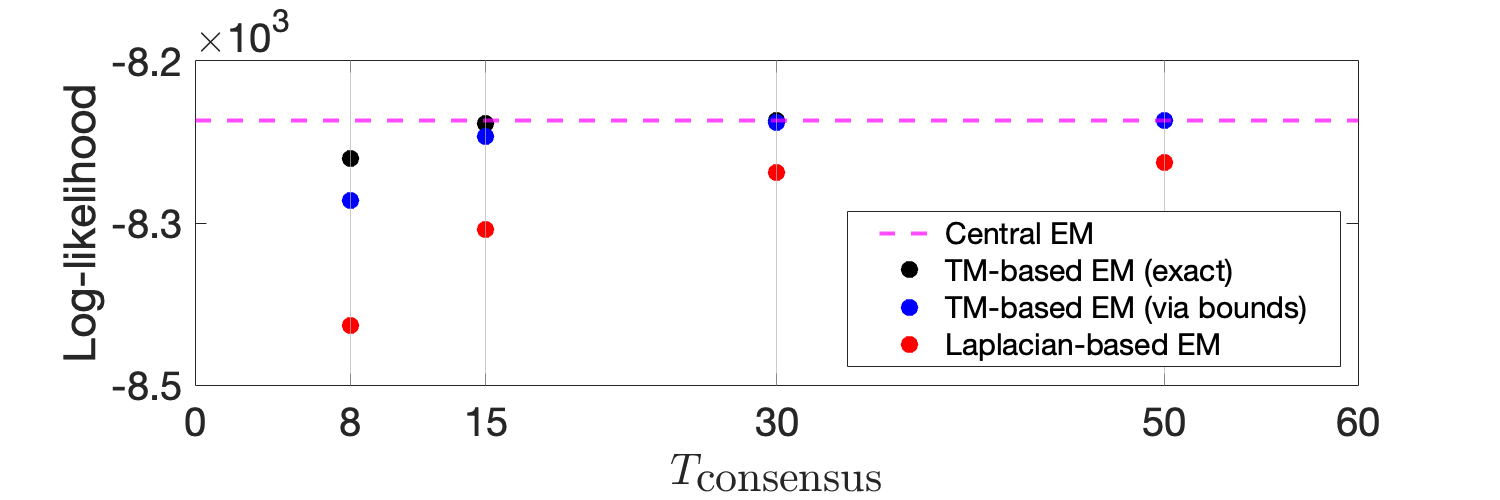}\label{fig::Loglikelihood}}
    \caption{{\small Plot (a) depicts the GMM estimates of Agent 1 after $T_{\textup{EM}}=10$ iterations of the EM algorithm. 
    Plot (b) represents the maximum Log-likelihood of the estimated models of the central EM, TM-based EM (exact), TM-based EM (via bounds), and Laplacian-based EM.
    }}
\end{figure}

In order to compare the convergence rates of the proposed algorithms against the existing ones in the literature, Fig.~\ref{fig::convergence} illustrates the evolution of the estimates of $\pi_l$, for $l=1$, over $T_{\textup{consensus}}=50$ steps in the first iteration of the EM algorithm. Here, the convergence error trajectories of the consensus algorithms is denoted by $e(k)=\textup{log}\sum_{i=1}^{N}(\pi_1^i(k)-\pi_1^{\star})^2$, where $\pi_1^{\star}$ is the central solution obtained by~\eqref{eq::updateA}. It is shown that the TM algorithm achieves the fastest convergence rate and also, the buffered Laplacian algorithm with $\mathpzc{d}=2$ is faster than the original Laplacian algorithm without using buffered feedback.
\begin{figure}[h]
    \centering
    {\includegraphics[trim= 1pt 5pt 0 0 ,clip,width=.8\linewidth]{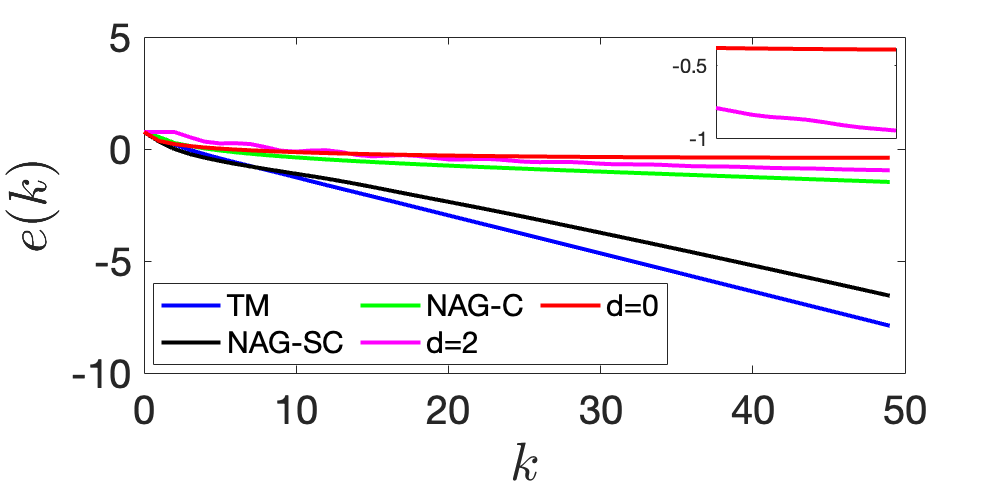}}
    \caption{{This plot shows the convergence error of $e(k)$ for obtaining $\pi_1^i$ with respect to the actual average computed as~\eqref{eq::updateA}. Five different average consensus algorithms have been implemented and compared as above. For better visual comparison, the convergence error trajectories of the Laplacian algorithm with $\mathpzc{d}=0$ and $\mathpzc{d}=2$ for $k=\{40,\cdots,50\}$, is maximized in the top right corner.
    }}
\end{figure}\label{fig::convergence}
\section{Accelerated distributed linear regression}
In this section, we investigate the numerical examples that show the effect of outdated feedback data and the implementation of accelerated first-order optimization methods. We use real data from~\cite{utts2021mind} to solve a linear regression problem by reformulating it as two average consensus problems. In the first example, the common Laplacian average consensus algorithm is compared with the proposed algorithm in~\eqref{eq::consensus-orig_dated} with different number of buffers to analyze the effect of $d$ on convergence. In the second example, convergence of the proposed accelerated algorithms~\eqref{eq::ref15}-\eqref{eq::NAG-SC} are compared against \eqref{eq::consensus-orig_dated} and the algorithm in~\cite{TCA-BNO-MJC:08}.

The dataset in~\cite{utts2021mind} has a size of 50 points for the 50 states in the United States. The variables are $y$ which is year 2002 birth rate per 1000 females 15 to 17 years old, and $x$ which is the poverty rate (the percent of the state’s population living in households with incomes below the federally defined poverty level). The objective is to find a linear relation between $x$ and $y$, i.e., solve the following problem:
\begin{align}\label{eq::regression}
    \textup{min}_{a,b\in\mathbb{R}}\sum\nolimits_{i=1}^{50}\|y^i-(ax^i+b)\|^2
\end{align}
for $a$ and $b$. Since our goal is to study the effect of buffer in convergence, we simplify the problem and assume to know the optimal value of $b$. Thus, we only solve for the variable $a$ as the slope of the fitted line. Here, $b=4.267$. By taking a derivative from~\eqref{eq::regression} with respect to $a$, setting it to zero and substituting the value of $b$, we conclude that
\begin{align}\label{eq::RegressionSolution}
    a=\frac{\sum_{i=1}^{50}x^i(y^i-b)}{\sum_{i=1}^{50}(x^i)^2}.
\end{align}
Let us now solve this problem while the dataset is distributed over a network of 5 agents where each agent has access to 10 arbitrary state data points. Agents of the network communicate over a connected graph depicted in Fig.~\ref{fig::graph}. This solution can be achieved by solving two average consensus problems for the nominator and the denominator, and computing the division. Let $\eta_1^i(k)$ and $\eta_2^i(k)$ denote agent $i$'s local estimate of the nominator and the denominator of~\eqref{eq::RegressionSolution}, respectively. The reference values of the first average consensus problem are $\mathsf{r}_1^1=\sum_{i=1}^{10}x^i(y^i-b)$, $\mathsf{r}_1^2=\sum_{i=11}^{20}x^i(y^i-b)$,$\cdots$, $\mathsf{r}_1^5=\sum_{i=41}^{50}x^i(y^i-b)$. The same process is applied to the second average consensus problem with the reference values $\mathsf{r}_2^1=\sum_{i=1}^{10}(x^i)^2$, $\mathsf{r}_2^2=\sum_{i=11}^{20}(x^i)^2$,$\cdots$, $\mathsf{r}_2^5=\sum_{i=41}^{50}(x^i)^2$. Each agent computes the estimate $\hat{a}^i(k)=\frac{\eta_1^i(k)}{\eta_2^i(k)}$ at every step $k\in\mathbb{Z}_{\geq0}$.

\emph{Outdated feedback}:
Let the agents implement algorithm~\eqref{eq::consensus-orig_dated} with three different values of $d=\{1,5,10\}$ and a buffer-free case. Fig.~\ref{fig::error1} depicts the convergence error of agent's trajectories with respect to the optimal value in~\eqref{eq::RegressionSolution}, i.e., $e(k)=\textup{log}\sum_{i=1}^{N}(\hat{a}^i(k)-a)^2$. The Green line, representing the implementation of one step buffer, as seen in the figure, converge faster than the buffer-free case. By increasing $d$ and using further outdated feedback, faster convergence is achieved. This shows the effect of buffer in our analysis. However, as mentioned previously, exceeding $\bar{d}$ may result in divergence or slower convergence. The Turquoise trajectories with $d=10$ illustrate the fluctuation in convergence.

\emph{Accelerated consensus via first-order accelerated optimization algorithms}:
Next, we compare the convergence rate of the accelerated algorithms~\eqref{eq::ref15}, \eqref{eq::TM} and \eqref{eq::NAG-SC} with that of the accelerated algorithm proposed in~\cite{TCA-BNO-MJC:08} and also algorithm~\eqref{eq::consensus-orig_dated}. Let the agents of the network solve two average consensus problems to reach $a$ globally. Fig.~\ref{fig::error2} shows the convergence error trajectories $e(k)=\textup{log}\sum_{i=1}^{N}(\hat{a}^i(k)-a)^2$ reaching the agreement similar to the previous example. Algorithm~\eqref{eq::consensus-orig_dated} with $d=5$ converges slower compared to others, while the TM algorithm converges the fastest. Despite using   the optimal parameters for the algorithm of  in~\cite{TCA-BNO-MJC:08}, still TM-based algorithm delivers the fastest convergence.
\begin{figure}[h]
    \centering
    \subfloat[]{\includegraphics[trim= 1pt 5pt 0 0 ,clip,width=.9\linewidth]{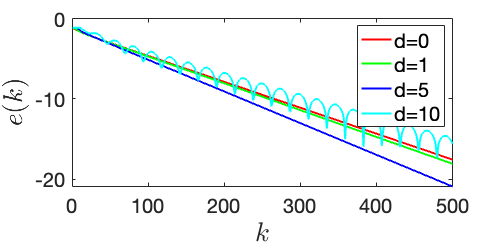}\label{fig::error1}} \vspace{-0.1in}\\
    \subfloat[]{\includegraphics[trim= 1pt 5pt 0 0 ,clip,width=.9\linewidth]{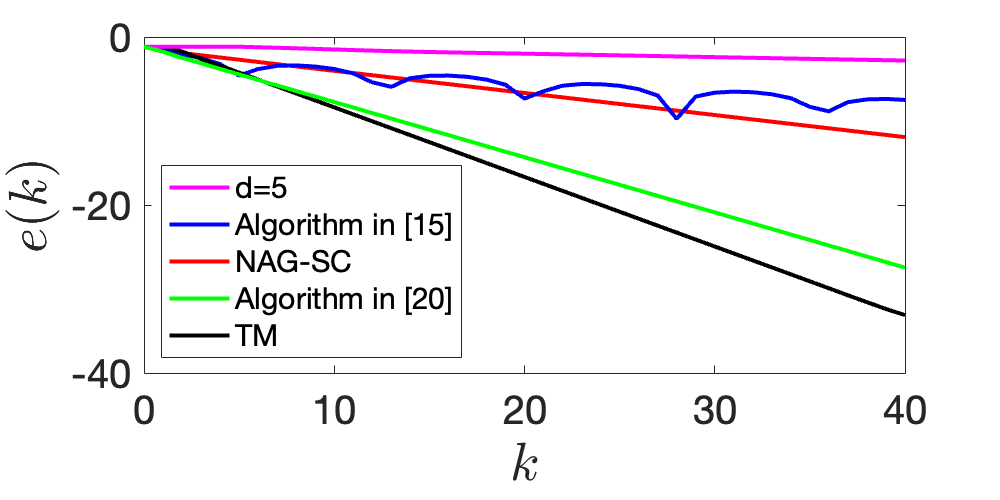}\label{fig::error2}}
    \caption{{\small Plot (a) compares the effect of different values of buffer ($d=\{0,1,5,10\}$) on the convergence error $e(k)$ over $k$. We can see that $d=5$ generates the best results. Plot (b) shows the comparison between the optimization inspired algorithms~\eqref{eq::ref15}-\eqref{eq::NAG-SC} and the algorithm~\eqref{eq::consensus-orig_dated} with $d=5$. As expected from the results in section~\ref{sec::acc}, algorithm~\eqref{eq::TM} converges faster than the other methods.
    }}\vspace{-0.1in}
    \label{fig::error}
\end{figure}
\section{Conclusion}\label{sec::Con}\vspace{-2pt}
In this letter, we proposed two methods to accelerate reaching the solution in the average consensus problem over connected graphs in a discrete-time communication setting. In contrast to some existing methods in the literature where graph connectivity is increased or edge weights are optimized for faster convergence, we used buffered states to accelerate reaching average consensus. First, we proposed to use buffered states in the well-known Laplacian algorithm in order to achieve a faster convergence rate. Furthermore, we obtained the admissible ranges of delay that allow agents to reach the solution. In the second method, we proposed two accelerated average consensus algorithms inspired by the NAG-SC and TM algorithms. We showed that the average consensus algorithm can be cast as a convex optimization problem which can be solved using the NAG-SC and TM algorithms. To demonstrate the efficacy of our results, we
conducted a simulation study of a distributed EM algorithm which is used vastly, e.g., in sensor networks, to estimate a Gaussian Mixture Model of a set of observed targets. By measuring the maximum Log-likelihood of the estimates of the GMM model using the proposed algorithms against other algorithms in the literature, we showed that the TM-based EM achieves more accurate estimates. The estimated models of the distributed algorithms were compared with respect to the central solution of the EM algorithm.

\section*{Appendix}
We use the following auxiliary lemma in the proof of Theorem~\ref{lem::beta-dis_scalar}, which we present afterwards. 

\begin{lem}[Location of roots of $s^{\mathpzc{d}+1}-s^\mathpzc{d}+c=0$~\cite{SAK:94}]\label{thm::dis_time}{\rm
Let $\mathcal{S}=\{s^i\in\mathbb{C}|s^{\mathpzc{d}+1}-s^\mathpzc{d}+c=0\}$ and $s^1=\max\{|s^i|\,|s^i\in\mathcal{S}\}$ for any $i\in\{1,\cdots,\mathpzc{d}+1\}$. Then for  $\mathpzc{d}\in\mathbb{Z}_{>1}$   and $c\in\real_{>0}$ all the roots  are inside the disk $|s|<\frac{1}{|a|}$ if and only if $|a|<\frac{\mathpzc{d}+1}{\mathpzc{d}}$ and
\begin{align}
    \frac{(|a|-1)}{|a|^{\mathpzc{d}+1}}<c<\frac{\sqrt{a^2+1-2|a|\cos\phi}}{|a|^{\mathpzc{d}+1}}
\end{align}
where $\phi\in[0,\frac{\pi}{\mathpzc{d}+1}]$ is the solution of $\frac{\sin(\mathpzc{d}\phi)}{\sin((\mathpzc{d}+1)\phi)}=\frac{1}{|a|}$.  

Moreover if $0<c<\frac{\mathpzc{d}^\mathpzc{d}}{(\mathpzc{d}+1)^{\mathpzc{d}+1}}$, as $c$ increases then the  value of 
$|s^1|$ decreases while the absolute value of 
all the other roots increase. In addition the smallest value of $|s^1|$   occurs at  $c=\frac{\mathpzc{d}^\mathpzc{d}}{(\mathpzc{d}+1)^{\mathpzc{d}+1}}$ where $|s^1| =\frac{\mathpzc{d}}{\mathpzc{d}+1}$.}\boxend
\end{lem}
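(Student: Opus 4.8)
The plan is to treat $c\in\real_{>0}$ as a continuation parameter and to locate the roots of $\mathcal{T}(s)=s^{\mathpzc{d}+1}-s^{\mathpzc{d}}+c$ by tracking when one of them crosses the boundary circle $|s|=1/|a|$. Rewriting the characteristic equation as $s^{\mathpzc{d}}(1-s)=c$, a root lies on that circle precisely when $s=\frac{1}{|a|}\mathrm{e}^{\ii\phi}$, which upon substitution gives $s^{\mathpzc{d}}(1-s)=\frac{1}{|a|^{\mathpzc{d}}}\big(\mathrm{e}^{\ii\mathpzc{d}\phi}-\frac{1}{|a|}\mathrm{e}^{\ii(\mathpzc{d}+1)\phi}\big)$. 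Since $c$ is real, the imaginary part of the right-hand side must vanish, and $\mathrm{Im}=0$ yields exactly the stated transcendental condition $\frac{\sin(\mathpzc{d}\phi)}{\sin((\mathpzc{d}+1)\phi)}=\frac{1}{|a|}$; taking moduli in $s^{\mathpzc{d}}(1-s)=c$ then produces the crossing value $c=\frac{\sqrt{a^2+1-2|a|\cos\phi}}{|a|^{\mathpzc{d}+1}}$. The degenerate crossing $\phi=0$ corresponds to the real root $s=1/|a|$ and gives the lower value $c=\frac{|a|-1}{|a|^{\mathpzc{d}+1}}$. These are the only candidate endpoints of the admissible interval in $c$.

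First I would establish admissibility. Expanding $g(\phi)=\frac{\sin(\mathpzc{d}\phi)}{\sin((\mathpzc{d}+1)\phi)}$ as $\phi\to0^{+}$ gives $g(0^{+})=\frac{\mathpzc{d}}{\mathpzc{d}+1}$, while $g(\phi)\to\infty$ as $\phi\to\frac{\pi}{\mathpzc{d}+1}$, so the equation $g(\phi)=1/|a|$ has a solution $\phi\in(0,\frac{\pi}{\mathpzc{d}+1}]$ exactly when $1/|a|>\frac{\mathpzc{d}}{\mathpzc{d}+1}$, i.e. $|a|<\frac{\mathpzc{d}+1}{\mathpzc{d}}$; this is where that hypothesis enters. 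Next, using continuity of the roots in $c$ and the fact that as $c\to0^{+}$ the roots collapse to $s=0$ (multiplicity $\mathpzc{d}$) and $s=1$, I would count how many roots lie in $|s|<1/|a|$ on each subinterval bounded by the two crossing values, concluding that all $\mathpzc{d}+1$ roots are strictly inside precisely for $\frac{|a|-1}{|a|^{\mathpzc{d}+1}}<c<\frac{\sqrt{a^2+1-2|a|\cos\phi}}{|a|^{\mathpzc{d}+1}}$. This direction-of-crossing bookkeeping—verifying that the dominant real root \emph{enters} the disk at the lower value while a complex pair first \emph{leaves} it at the upper value—is the delicate part of the first claim.

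For the monotonicity statement I would differentiate $\mathcal{T}(s(c),c)=0$ implicitly to obtain $\frac{\mathrm{d}s}{\mathrm{d}c}=-\frac{1}{\mathcal{T}'(s)}=-\frac{1}{s^{\mathpzc{d}-1}\big((\mathpzc{d}+1)s-\mathpzc{d}\big)}$. As $c\to0^{+}$ the dominant root $s^{1}$ is real and near $1$, and a sign analysis of $\mathcal{T}$ on $(0,1)$ shows that for $0<c<\frac{\mathpzc{d}^{\mathpzc{d}}}{(\mathpzc{d}+1)^{\mathpzc{d}+1}}$ there are exactly two positive real roots, with $s^{1}\in(\frac{\mathpzc{d}}{\mathpzc{d}+1},1)$; on this range the bracket $(\mathpzc{d}+1)s^{1}-\mathpzc{d}>0$, so $\frac{\mathrm{d}s^{1}}{\mathrm{d}c}<0$ and $|s^{1}|=s^{1}$ strictly decreases. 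A direct computation shows $\mathcal{T}$ and $\mathcal{T}'$ share the root $s=\frac{\mathpzc{d}}{\mathpzc{d}+1}$ exactly at $c=\frac{\mathpzc{d}^{\mathpzc{d}}}{(\mathpzc{d}+1)^{\mathpzc{d}+1}}$, the value at which these two real roots coalesce, so $|s^{1}|$ attains its minimum $\frac{\mathpzc{d}}{\mathpzc{d}+1}$ there. For the remaining roots I would examine $\frac{\mathrm{d}}{\mathrm{d}c}|s|^{2}=2\,\re{\bar{s}\,\tfrac{\mathrm{d}s}{\mathrm{d}c}}=-2\,\re{\frac{\bar{s}}{s^{\mathpzc{d}-1}((\mathpzc{d}+1)s-\mathpzc{d})}}$ and show it is positive for every non-dominant root. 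This sign analysis for the complex roots is the main obstacle; I would buttress it with the Vieta relation $\prod_{i}|s_{i}|=c$, which, together with the established decrease of $|s^{1}|$, forces the non-dominant magnitudes to grow as $c$ increases.
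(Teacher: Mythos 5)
You are not actually competing with a proof in the paper here: the paper gives no proof of this lemma, importing it wholesale from Kuruklis~\cite{SAK:94} (after the rescaling $\tilde{s}=|a|s$, $b=c|a|^{\mathpzc{d}+1}$, the disk $|s|<1/|a|$ becomes the unit disk and the statement is exactly Kuruklis's asymptotic-stability criterion for $x_{n+1}-ax_n+bx_{n-k}=0$). So your proposal is in effect a blind reconstruction of Kuruklis's proof, and its skeleton is right: on the boundary $s=\frac{1}{|a|}\ee^{\ii\phi}$, the condition $\im{s^{\mathpzc{d}}(1-s)}=0$ does give $\sin(\mathpzc{d}\phi)/\sin((\mathpzc{d}+1)\phi)=1/|a|$, taking moduli gives the upper crossing value of $c$, the degenerate case $\phi=0$ gives the lower one, and the limits $g(0^{+})=\frac{\mathpzc{d}}{\mathpzc{d}+1}$, $g(\phi)\to\infty$ explain the hypothesis $|a|<\frac{\mathpzc{d}+1}{\mathpzc{d}}$. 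But your first claim is not closed. (a) You need strict monotonicity of $g$ on $(0,\frac{\pi}{\mathpzc{d}+1})$; without it, neither "a solution exists exactly when $|a|<\frac{\mathpzc{d}+1}{\mathpzc{d}}$" nor uniqueness of the upper crossing value follows. This is fixable: the numerator of $g'$ simplifies to $\frac{1}{2}\bigl((2\mathpzc{d}+1)\sin\phi-\sin((2\mathpzc{d}+1)\phi)\bigr)>0$. (b) You must exclude the other branches of $\im{\cdot}=0$ with $\phi\in(\frac{\pi}{\mathpzc{d}+1},\pi]$, showing via the sign of the real part that they yield $c\le 0$ or crossings beyond the first exit. (c) The transversality/direction-of-crossing computation at both endpoints, which you explicitly call "the delicate part," is never performed; deferring it means the if-and-only-if is asserted, not proved. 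Note also that your narrative of the dominant real root "entering" the disk at the lower value tacitly assumes $|a|>1$; for $|a|\le 1$ the lower bound is non-positive and vacuous, and that root starts inside.

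The more serious gap is in the second claim. Your real-root analysis is correct and essentially complete: $h(s)=s^{\mathpzc{d}}(1-s)$ is unimodal on $[0,1]$ with maximum $\frac{\mathpzc{d}^{\mathpzc{d}}}{(\mathpzc{d}+1)^{\mathpzc{d}+1}}$ at $s=\frac{\mathpzc{d}}{\mathpzc{d}+1}$, giving two positive real roots that straddle $\frac{\mathpzc{d}}{\mathpzc{d}+1}$, and the sign of $(\mathpzc{d}+1)s-\mathpzc{d}$ in $\frac{\mathrm{d}s}{\mathrm{d}c}=-\frac{1}{s^{\mathpzc{d}-1}((\mathpzc{d}+1)s-\mathpzc{d})}$ correctly gives decrease of the larger and increase of the smaller, with coalescence at the double root. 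But for the complex roots you concede you cannot settle the sign of $\frac{\mathrm{d}}{\mathrm{d}c}|s|^{2}=-2\re{\bar{s}/\bigl(s^{\mathpzc{d}-1}((\mathpzc{d}+1)s-\mathpzc{d})\bigr)}$, and the Vieta identity $\prod_{i}|s_{i}|=c$ you offer as a substitute provably cannot close this: it controls only the \emph{product} of the non-dominant moduli, and is fully consistent with one complex pair shrinking while another grows. That per-root monotonicity is precisely the technical core of Kuruklis's lemma, so the one place where you replace his analysis with your own argument, the argument fails. Relatedly, your identification of $s^{1}$ with the larger positive real root is justified only near $c=0^{+}$; without the unproven growth of the complex moduli (or a separate dominance argument), a complex pair could in principle overtake it before $c=\frac{\mathpzc{d}^{\mathpzc{d}}}{(\mathpzc{d}+1)^{\mathpzc{d}+1}}$, which would invalidate the claim that $|s^{1}|$ decreases on the whole interval.
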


\noindent Table.~\ref{table.1} shows different values of $\frac{\mathpzc{d}^\mathpzc{d}}{(\mathpzc{d}+1)^{\mathpzc{d}+1}}$ for a given $\mathpzc{d}\in\{1,\cdots,5\}$.

\begin{proof}[Proof of Theorem~\ref{lem::beta-dis_scalar}]
Notice that for $\mathpzc{d}=0$ the asymptotic convergence factor of \eqref{eq::laclacian_equivalent_z2} is equal to $\mathsf{r}_0=|1-\delta\lambda_i|$. Hence, our aim is to find the values of $\mathpzc{d}$ such that $\mathsf{r}_\mathpzc{d}<\mathsf{r}_{0}$, which means that the roots of the characteristic equation~\eqref{eq::char-lambda-i} lie inside the disk, $|s|<|1-\delta\lambda_i|$.
Theorem~\ref{thm::dis_time} implies that this holds if and only if 
  \begin{subequations}
  \begin{align}
  &\frac{1}{|1-\delta\lambda_i|}<\frac{\mathpzc{d}+1}{\mathpzc{d}}\label{eq::dis_scal_proof_a}\\
  &\frac{|\frac{1}{1-\delta\lambda_i}|\!-\!1}{|\frac{1}{1-\delta\lambda_i}|^{\mathpzc{d}+1}}<\!\delta\lambda_i\!<\!\frac{\sqrt{\frac{1}{(1-\delta\lambda_i)^2}\!+\!1\!-\!2|\frac{1}{1-\delta\lambda_i}|\cos{\phi}}}{|\frac{1}{1-\delta\lambda_i}|^{\mathpzc{d}+1}},\label{eq::dis_scal_proof_b}
 \end{align}
  \end{subequations}
  where $\phi\in[0,\frac{\pi}{\mathpzc{d}+1}]$ is the solution of $\frac{\sin{\mathpzc{d}\phi}}{\sin{(\mathpzc{d}+1)\phi}}=\frac{1}{|1-\delta\lambda_i|}$.~For any $\delta\in(0,\frac{2}{\lambda_N})$, 
  we know $-1<1-\delta\lambda_i<1$. From~\eqref{eq::dis_scal_proof_a} we get $\mathpzc{d}<\frac{|1-\delta\lambda_i|}{1-|1-\delta\lambda_i|}$. The left-side inequality of~\eqref{eq::dis_scal_proof_b}~is satisfied for any $\delta\in(0,\frac{2}{\lambda_N})$ with $|1-\delta\lambda_i|\!<\!1$. By some algebraic manipulation, the right-side inequality deduces to $$\mathpzc{d}\!<
  \!\frac{\text{ln}(\frac{\delta\lambda_i}{\sqrt{(1-\delta\lambda_i)^2+1-2|1-\delta\lambda_i|\cos{\phi}}})}{\text{ln}(|1-\delta\lambda_i|)},$$  which concludes \eqref{eq::buffer_bound_dis_rate}. The last~statement is the direct application of Lemma~\ref{thm::dis_time} for~$c=\delta\lambda_i$.
\end{proof}
\begin{table}[]
\caption{The  values of $\frac{\mathpzc{d}^\mathpzc{d}}{(\mathpzc{d}+1)^{\mathpzc{d}+1}}$ for a given $\mathpzc{d}$.
    }
    \label{table.1}
    \centering
   \begin{tabular}{ |c|c|c|c|c|c| } 
 \hline
 $\mathpzc{d}$ & $1$ & $2$ & $3$ & $4$ & $5$\\ 
 \hline
 $\frac{\mathpzc{d}^\mathpzc{d}}{(\mathpzc{d}+1)^{\mathpzc{d}+1}}$ 
  & $0.250$  
  & $0.148$   
  & $0.105$   
  & $0.082$   
  & $0.067$  \\ 
 \hline
 \end{tabular}
\end{table}

\bibliographystyle{ieeetr}

@inproceedings{ROS-JSS:05,
  Address =	 {Seville, Spain},
  Author =	 {R. Olfati-Saber and J. S. Shamma},
  Booktitle =	 cdcecc,
  Month =	 {December},
  Pages =	 {6698--6703},
  Title =	 {Consensus filters for sensor networks and
                  distributed sensor fusion},
  Year =	 2005
}

@article{PY-RAF-KML:08,
  Author =	 {P. Yang and R. A. Freeman and K. M. Lynch},
  Title =	 {Multi-agent coordination by decentralized estimation
                  and control},
  Journal =	 tac,
  Number =	 11,
  Pages =	 {2480--2496},
  Volume =	 53,
  Year =	 2008
}



@Book{SY-PWN-AGU:10,
  author =	 {S. Yi and P. W. Nelson and A. G. Ulsoy},
  title =	 {Time-Delay Systems: Analysis and Control Using the {L}ambert {W} Function},
  publisher =	 {{W}orld {S}cientific {P}ublishing {C}ompany},
  year =	 2010,
  ISBN =	 9789814307390
}

@Article{SY-PWN-AGU:07,
  author =       {S. Yi and P. W. Nelson and A. G. Ulsoy},
  title =        {Survey on analysis of
time delayed systems via the Lambert W function},
  journal =      {{D}ynamics of {C}ontinuous, {D}iscrete and {I}mpulsive {S}ystems ({S}eries {A}},
  year =         2007,
  volume =       14,
  pages =        {296–-301}
}



@book{SN:01,
  title={Delay effects on stability: a robust control approach},
  author={Niculescu, Silviu-Iulian},
  volume={269},
  year={2001},
  publisher={Springer Science \& Business Media}
}


@article{MC-DAS-EMY:06,
author = {M. Cao and D.A. Spielman and E.M. Yeh},
title = {Accelerated Gossip Algorithms for Distributed
Computation},
journal = {In Proceedings of the 44th Annual Allerton Conference},
volume = {},
number = {},
pages = {952–959},
year = {2006}
}


@article{YC-WR:10,
author = {Y. Cao and W. Ren},
title = {Multi-Agent Consensus Using Both Current and Outdated States with Fixed and Undirected Interaction},
journal = {Journal of Intelligent and Robotic Systems},
volume = {58},
number = {1},
pages = {95-106},
year = {2010}
}


@article{ZM-YC-WR:10,
author = {Z. Meng and Y. Cao and W. Ren},
title = {Stability and convergence analysis of multi-agent consensus with information reuse},
journal = ijc,
volume = {83},
number = {5},
pages = {1081-1092},
year = {2010},

}


@article{SD-JN-AGU:11,
  Author =	 {S. Duan and J. Ni and A. G. Ulsoy},
  Title =	 {Decay function estimation for linear time
delay systems via the {L}ambert {W} function},
  Journal =	 {{J}ournal of {V}ibration and {C}ontrol},
  Number =	 10,
  Pages =	 {1462--1473},
  Volume =	 18,
  Year =	 2011
}

@article{BG-SM-MHS:98,
author = {B. Ghosh and S. Muthukrishnan and M.H. Schultz},
title = {First and Second-order Diffusive Methods for Rapid,
Coarse, Distributed Load Balancing},
journal = {Theory of Computing
Systems},
volume = {31},
number = {},
pages = {331–354},
year = {1998},

}



@Article{ros-jaf-rmm:07,
  author	= {R. Olfati-Saber and J. A. Fax and R. M. Murray},
  title		= {Consensus and cooperation in networked multi-agent
		  systems},
  journal	= ieeep,
  volume	= 95,
  number	= 1,
  pages		= {215--233},
  year		= 2007,
  fbnote	= {Special Issue on Networked Control Systems}
}


@inproceedings{ROS:07a,
  Author =	 {R. Olfati-Saber},
  Title =	 {Distributed tracking for mobile sensor networks with
                  information-driven mobility},
  Booktitle =	 acc,
  Month =	 {July},
  Pages =	 {4606--4612},
  Address =	 {New York, USA},
  Year =	 2007
}

@Book{SM:07,
  author =	 {S. Meyn},
  title =	 {Control Techniques for Complex Networks},
  publisher =	 cambridge,
  year =	 2007,
  ISBN =	 9780521884419,
}


@inproceedings{PY-RAF-KML:07,
  Author =	 {P. Yang and R. A. Freeman and K. M. Lynch},
  Title =	 {Distributed cooperative active sensing using
                  consensus filters},
  Booktitle =	 icra,
  Month =	 {April},
  Pages =	 {405--410},
  Address =	 {Roma, Italy},
  Year =	 2007
}


@InProceedings{DPS-ROS-RMM:05b,
  author =	 {D. P. Spanos and R. Olfati-Saber and R. M. Murray},
  title =	 {Dynamic consensus on mobile networks},
  booktitle =	 ifacwc,
  year =	 2005,
  month =	 {July},
  address =	 {Prague, Czech Republic},
}


@inproceedings{RAF-PY-KML:06,
  title =	 {Stability and convergence properties of dynamic
                  average consensus estimators},
  author =	 {R. A. Freeman and P. Yang and K. M. Lynch},
  booktitle =	 cdc,
  pages =	 {338--343},
  year =	 2006
}

@inproceedings{HB-RAF-KML:10,
  Author =	 {H. Bai and R. A. Freeman and K. M. Lynch},
  Booktitle =	 cdc,
  Month =	 {December},
  Pages =	 {3104--3109},
  Title =	 {Robust dynamic average consensus of time-varying
                  inputs},
  Address =	 {Atlanta, GA, USA},
  Year =	 2010
}

@Article{MZ-SM:08a,
  author =       {M. Zhu and S. Mart{\'\i}nez},
  title =        {Discrete-time dynamic average consensus},
  journal =      automatica,
  year =         2010,
  volume =       46,
  number =       2,
  pages =        {322--329}
}

@inproceedings{WPMHH-KHJ-PT:12,
  author =	 {W. P. M. H. Heemels and K.H Johansson and P. Tabuada},
  title =	 {An introduction to event-triggered and
                  self-triggered control},
  pages =	 {3270--3285},
  booktitle =	 cdc,
  address =	 {Maui, HI},
  year =	 2012
}


@Article{MMJ-PT:11,
  author =	 {M. Mazo and P. Tabuada},
  title =	 {Decentralized event-triggered control over wireless
                  sensor/actuator networks},
  year =	 2011,
  journal =	 tac,
  volume =	 56,
  number =	 10,
  pages =	 {2456-2461},
}

@Article{XW-MDL:11,
  author =       {X. Wang and M. D. Lemmon},
  title =        {Event-triggering in distributed networked control
                  systems},
  journal =      tac,
  year =         2011,
  volume =       56,
  number =       3,
  pages =        {586-601}
}

@article{DVD-EF-KHJ:12,
  Author =	 {D. V. Dimarogonas and E. Frazzoli and
                  K. H. Johansson},
  Journal =	 tac,
  Number =	 5,
  Pages =	 {1291--1297},
  Title =	 {Distributed Event-Triggered Control for Multi-Agent
                  Systems},
  Volume =	 57,
  Year =	 2012
}

@article{EG-YC-HY-PA-DC:13,
  Author =	 {E. Garcia and Y. Cao and H. Yuc and P. Antsaklis
                  and D. Casbeer},
  Journal =	 ijc,
  Number =	 9,
  Pages =	 {1479--1488},
  Title =	 {Decentralised event-triggered cooperative control
                  with limited communication},
  Volume =	 86,
  Year =	 2013
}


@InProceedings{CN-JC:14-acc,
  author =       {C. Nowzari and J. Cort{\'e}s},
  title =        {Zeno-free, distributed event-triggered communication
                  and control for multi-agent average consensus},
  booktitle =    acc,
  year =         2014,
  address =      {Portland, OR},
  pages =        {2148-2153}
}

@Book{FB-JC-SM:09,
  author =	 {F. Bullo and J. Cort{\'e}s and S. Mart{\'\i}nez},
  title =	 {Distributed Control of Robotic Networks},
  year =	 2009,
  series =	 {Applied Mathematics Series},
  publisher =	 princeton,
  isbn =	 {978-0-691-14195-4}
}



@article{MCFD-WPMHH:12,
  Author =	 {M. C. F. Donkers and W. P. M. H. Heemels},
  Journal =	 tac,
  Number =	 6,
  Pages =	 {1362--1376},
  Title =	 {Output-based event-triggered control with guaranteed
                  $\mathcal{L}_\infty$-gain and improved and
                  decentralized event-triggering},
  Volume =	 57,
  Year =	 2012
}


@book{HKK:02,
  AUTHOR =       "H. K. Khalil",
  TITLE =        "Nonlinear Systems",
  PUBLISHER =    ph,
  ADDRESS =      ph_add,
  YEAR =         2002,
  EDITION =      3,
  ISBN =         0132280248
}






@Article{MVK-PJC-MM-CNN:94,
author = {M. V. Kothare and P. J. Campo and M. Morari and C .N. Nett},
title = {A Unified Framework for the Study of Anti-Windup Designs},
journal = automatica,
year = 1994,
volume = 30,
number=12,
pages = {1869-1883}
}

@Article{YYC-ZL:06,
author = {Y. Y. Cao and Z. Lin},
title = {An Anti-Windup Design for Polytopic Systems by a Parameter-Dependent Lyapunov Function Approach},
journal = ijss,
year = 2006,
volume = 37,
number=2,
pages = {129-139}
}

@Article{EFM-MVK-MM:01,
author = {E. F. Mulder and M. V. Kothare and M. Morari},
title = {Multivariable Anti-Windup Controller Synthesis using Linear Matrix Inequalities},
journal = automatica,
year = 2001,
volume = 37,
number=9,
pages = {1407-1416}
}

@Article{GG-JH-IP-ART-MCT-LZ:03,
author = {G. Grimm and J. Hatfield and I. Postlethwaite and A. R. Teel and M. C. Turner and L. Zaccariani},
title = {Anti-Windup for Stable Linear Systems with Input Saturation: An LMI Based Synthesis},
journal = tac,
year = 2003,
volume = 48,
number=9,
pages = {1509-1525}
}

@Article{TH-ART-MCT-LZ:08,
author = {T. Hu and A. R. Teel and M. C. Turner and L. Zaccarian},
title = {Anti-Windup Synthesis for Linear Control Systems with Input Saturation: Achieving Regional, Nonlinear Performance},
journal = automatica,
year = 2008,
volume = 44,
number=2,
pages = {512-519}
}

@InProceedings{TH-ART-MCT-LZ:05,
  author =	 {T. Hu and A. R. Teel and M. C. Turner and L. Zaccarian},
  title =	 {Regional Anti-Windup Compensation for Linear Systems with Input Saturation},
  booktitle =	 acc,
  year =	 2005,
  pages={3397--3402},
  month={June},
  address =	 {Portland, OR}
  }

@InProceedings{TH-ART-MCT-LZ:05acc,
  author =	 {T. Hu and A. R. Teel and M. C. Turner and L. Zaccarian},
  title =	 {Nonlinear $\mathcal{L}_2$ Gain and Regional Analysis for Linear Systems with Anti-Windup Compensation},
  booktitle =	 acc,
  year =	 2005,
  pages={3391--3396},
  month={June},
  address =	 {Portland, OR}
  }

@Article{JMGDS-ST:05,
author = {J. M. Gomes da Silvia and S. Tarbouriechi},
title = {Anti-Windup Design with Guaranteed Regions of Stability: An LMI-Based Approach},
journal = tac,
year = 2005,
volume = 50,
number=1,
pages = {106-111}
}

@Article{FF-SG-LZ:12,
author = {F. Forni and S. Galeani and L. Zaccarian},
title = {Anti-Windup Synthesis for Linear Control Systems with Input Saturation: Achieving Regional, Nonlinear Performance},
journal = auto,
year = 2012,
volume = 48,
number=8,
pages = {1502-1513}
}

@Article{AB-ZL:03,
author = {A. Bateman and Z. Lin},
title = {An Analysis and Design Method for Linear Systems under Nested Saturation},
journal = scl,
year = 2003,
volume = 48,
number=1,
pages = {41-52}
}

@Article{TN-FJ:00,
author = {T. Nguyen and F. Jabbari},
title = {Output Feedback Controllers for Disturbance Attenuation with Actuator Amplitude and Rate Saturation},
journal = auto,
year = 2000,
volume = 36,
number=9,
pages = {1339-1346}
}

@Article{JS-MT-IP:10,
author = {J. Sofroni and M. Turner and I. Postlethwaite},
title = {Anti-Windup Synthesis for Systems with Rate-Limits using Riccati Equations},
journal = ijc,
year = 2000,
volume = 83,
number=2,
pages = {233-245}
}

@Article{SG-SO-ART-LZ:08,
author = {S. Galeani and S. Onori and A. R. Teel and L. Zaccarian},
title = {A Magnitude and Rate Saturation Model and its Use in the Solution of a Static Anti-Windup Problem},
journal = scl,
year = 2008,
volume = 57,
number=1,
pages = {1-9}
}

@Article{SSK-FJ:11,
author = {S. S. Kia and  F. Jabbari},
title = {Multi-Stage Anti-Windup Compensation for Open Loop Stable Plants},
journal = tac,
year = 2011,
volume = 56,
number=9,
pages = {2166-2172}
}

@Article{TH-ZL-BMC:02,
author = {T. Hu and Z. Lin and B. M. Chen},
title = {Analysis and Design for Discrete-Time Linear Systems Subject to Actuator Saturation},
journal = scl,
year = 2002,
volume = 45,
number=2,
pages = {97-112}
}

@Book{RCD-RHB:08,
  author =       {R. C. Dorf and R. H. Bishop},
  title =        {Modern Control Systems},
  publisher =    PH,
  year =         2008,
  address =   PH_add,
  edition =      {11th},
  isbn =     {978-0-13-600152-2}
}

@Article{ROS-RMM:04,
  author =	 {R. Olfati-Saber and R. M. Murray},
  title =	 {Consensus problems in networks of agents with switching
                  topology and time-delays},
  journal =	 tac,
  year =	 2004,
  volume =	 49,
  number =	 9,
  pages =	 {1520-1533}
}
@Article{JC-MC-ZG-RL-DZ:14,
  author =	 {J. Chen and  M. Chi and Z.Guan and R.Liao and D. Zhang},
  title =	 {Multiconsensus of Second-Order Multiagent Systems with
                Input Delays},
  journal =	 {{M}athematical {P}roblems in {E}ngineering},
  year =	 2014,
  volume =	 2014,
}

@Article{MV-JH-JV-VF:15,
  author =	 {J. Chen and  M. Chi and Z.Guan and R.Liao and D. Zhang},
  title =	 {Delayed Consensus Problem for Single and
              Double Integrator Systems},
  journal =	 {{M}athematical {P}roblems in {E}ngineering},
  year =	 2015,
  volume =	 2015,
  note={{A}rticle {I}{D} 461098}
}

@Article{PAB-GFT:08,
  author =	 {P. A. Bliman and G. F. Trecate},
  title =	 {Average consensus problems in networks of
              agents with delayed communications},
  journal =	 automatica,
  year =	 2008,
  volume =	 44,
  number =	 8,
  pages =	 {1985-1995}
}

@InProceedings{ZM-YR-YC-ZY:10,
  author =	 {Z. Meng and W. Ren and Y. Cao and Z. You},
  title =	 {Some Stability and Boundedness Conditions for Second-order                Leaderless and Leader-following Consensus with Communication              and Input Delays},
  booktitle =	 acc,
  year =	 2010,
  pages={574-579},
  month={July},
  address =	 {MD, USA}
  }
  
  @Article{WY-GC-MC-WR:13, 
  author =	 {W. Yu and G. Chen and M. Cao and W. Ren},
  title =	 {Delay-Induced Consensus and Quasi-Consensus in
              Multi-Agent Dynamical Systems},
  journal =	 tcs,
  year =	 2013,
  volume =	 60,
  number =	 10,
  pages =	 {2679-2687}
}

@InProceedings{WY-ALB-XW:08,
  author =	 {W. Yang and A. L. Bertozzi and X. Wang},
  title =	 {Stability of a second order consensus                   algorithm with time      delay},
  booktitle =	 cdc,
  year =	 2008,
  pages={2926-2931},
  month={July},
  address =	 {Cancun, Mexico}
  }
  
  @InProceedings{AS-DVD-KHJ:08,
  author =	 {A. Seuret and D. V. Dimarogonas and K. H. Johansson},
  title =	 {Consensus under Communication Delays},
  booktitle =	 cdc,
  year =	 2008,
  pages={4922-4927},
  month={Dec},
  address =	 {Cancun, Mexico}
  }
  
 @Article{ZM-WR-YC-ZY:11,
  author =	 {Z. Mengand and W. Ren and Y. Cao and  Z. You},
  title =	 {Leaderless and Leader-Following Consensus With Communication and Input Delays Under a Directed Network Topology},
  journal =	 tsmcA,
  year =	 2011,
  volume =	 41,
  number =	 1,
  pages =	 {75-88} 
  }
  
  @inproceedings{ROS:07,
  Address =	 {New Orleans, USA},
  Author =	 {R. Olfati-Saber},
  Booktitle =	 cdc,
  Month =	 {December},
  Pages =	 {5492--5498},
  Title =	 {Distributed Kalman filtering for sensor networks},
  Year =	 2007
}
  
 @Book{WR-RWB:08,
  author =       {W. Ren and R. W. Beard},
  title =        {Distributed Consensus in Multi-vehicle Cooperative Control},
  publisher =    sv,
  year =         2008,
  isbn =     {9978-1-84800-014-8}
} 

@Article{CNH-TC:14,
  author =	 {C. N. Hadjicostis and T. Charalambous},
  title =	 {Average Consensus in the Presence of Delays in Directed Graph Topologies},
  journal =	 tac,
  year =	 2004,
  volume =	 59,
  number =	 3,
  pages =	 {1520--1533}
}

@Article{GS-YH-KHJ:12,
  author =	 {G. Shi and Y. Hong  and K. H. Johansson },
  title =	 {Connectivity and set tracking of multi-agent systems guided by multiple moving leaders},
  journal =	 tac,
  year =	 2012,
  volume =	 57,
  number =	 3,
  pages =	 {663--676}
}

@Article{WR:07,
  author =	 {W. Ren},
  title =	 {Multi-vehicle consensus with a time-varying reference state},
  journal =	 scl,
  year =	 2007,
  volume =	 56,
  number =	 {7--8},
  pages =	 {474--483}
}
@Article{HS-TM:06,
  author =	 {H. Shinozaki and T. Mori},
  title =	 {Robust stability analysis of linear time-delay systems by {L}ambert {W} function: Some extreme             point results},
  journal =	 automatica,
  year =	 2006,
  volume =	 42,
  number =	 10,
  pages =	 {1791--1799}
}


@Article{LX-SB:04,
  author =	 {L. Xiao and S. Boyd},
  title =	 {Fast linear iterations for distributed averaging},
  journal =	 scl,
  year =	 2004,
  volume =	 53,
  pages =	 {65-78}
}


@Article{WR-RWB:05,
  author =	 {W. Reb and R. W. Beard},
  title =	 {Consensus seeking in multi-agent systems
under dynamically changing interaction topologies},
  journal =	 tac,
  year =	 2005,
  volume =	 50,
  number =	 5,
  pages =	 {655-661}
}


@article{MB:87,
author = { M.   Buslowicz},
title = {Simple stability criterion for a class of delay differential systems},
journal = {International {J}ournal of {S}ystems {S}cience},
volume = {18},
number = {5},
pages = {993-995},
year = {1987},
doi = {10.1080/00207728708964026},

URL = { 
        http://dx.doi.org/10.1080/00207728708964026
    
},
eprint = { 
        http://dx.doi.org/10.1080/00207728708964026
    
}
,
    abstract = { A linear delay differential system of the form [xdot](t) = Ax(t â h), x(t) â Rn is considered and simple analytic necessary and sufficient conditions for asymptotic stability are derived. The criterion is given in terms of the eigenvalues of the matrix A and the time delay h. }
}

@Article{DB-DV-LST-JH:05,
  author =	 {D.Bratsun and D. Volfson and L.S. Tsimring and J. Hasty},
  title =	 {Delay-induced stochastic oscillations in gene regulation},
  journal =	 {Proceedings of the National Academy of Sciences},
  year =	 2005,
  volume =	 102,
  number =	 41,
  pages =	 {14593-14598}
}

@Article{GO-JM-RMM:08,
  author =	 {G. OROSZ1 and J. MOEHLIS AND R. M. MURRAY},
  title =	 {Controlling biological networks by time-delayed signals},
  journal =	 {Transaction of the Royal Society},
  year =	 2008,
  volume =	 9,
  number =	 12,
  pages =	 {981-991}
}

@Book{EF:14,
  author =       {Emilia Fridman},
  title =        {Introduction to Time-Delay Systems},
  publisher =    {Springer International Publishing},
  year =         2014,
  isbn =     {978-3-319-09392-5}
} 

@Book{MW-YH-JS:10,
  author =       {M. Wu and Y. He and J. She},
  title =        {Stability Analysis and Robust Control of Time-Delay Systems},
  publisher =    {Springer-Verlag Berlin Heidelberg},
  year =         2010,
  isbn =     {978-3-642-03037-6}
} 


@Article{NS-RDJ:88,
  author =	 {G. Orosz and J. Moejlis AND R. M. Murray},
  title =	 {Exploitation of time delays for improved process control},
  journal =	 ijc,
  year =	 1988,
  volume =	 48,
  number =	 3,
  pages =	 {1137-1152}
}

@Article{XBL:02,
  author =	 {X.B. Liang},
  title =	 {Effect of Transmission Delay on the Rate of Convergence of a Class of Nonlinear Contractive Dynamical Systems},
  journal =	 {IEEE TRANSACTIONS ON NEURAL NETWORKS},
  year =	 2002,
  volume =	 13,
  number =	 1,
  pages =	 {244-248}
}

@Book{WJR:93,
  author =       {W.J. Rugh},
  title =        {Linear System Theory},
  publisher =    {New Jersey: Prentice Hall},
  year =         1993,
  isbn =     {0-13-441205-2}
}

@Article{NO-BTH:94,
  author =	 {N. Olgac and B.T. Holm-Hansen},
  title =	 {A Novel Active Vibration Absorption Technique: Delayed Resonator},
  journal =	 jsv,
  year =	 1994,
  volume =	 176,
  number =	 1,
  pages =	 {93-104}
}

@Article{KP:92,
  author =	 {K. Pyragas},
  title =	 {Continuous control of chaos by self-controlling feedback},
  journal =	 {Physics Letters A},
  year =	 1992,
  volume =	 170,
  number =	 6,
  pages =	 {421-428}
}

@Article{KP:95,
  author =	 {K. Pyragas},
  title =	 {Control of chaos via extended delay feedback},
  journal =	 {Physics Letters A},
  year =	 1995,
  volume =	 206,
  number =	 6,
  pages =	 {323-330}
}

@Article{KP:95,
  author =	 {K. Pyragas},
  title =	 {Control of chaos via extended delay feedback},
  journal =	 {Physics Letters A},
  year =	 1995,
  volume =	 206,
  number =	 6,
  pages =	 {323-330}
}

@Article{KP:95,
  author =	 {K. Pyragas},
  title =	 {Control of chaos via extended delay feedback},
  journal =	 {Physics Letters A},
  year =	 1995,
  volume =	 206,
  number =	 6,
  pages =	 {323-330}
}

@Article{BY-CDM:95,
  author =	 {B. Yang and C.D. Mote},
  title =	 {On Time Delay in Noncolocated Control of Flexible Mechanical Systems},
  journal =	 {Journal of Dynamic Systems Measurement and Control},
  year =	 1995,
  volume =	 114,
  number =	 3,
  pages =	 {409-415}
}

@Article{AGU:15,
  author =	 {A. G. Ulsoy},
  title =	 {Time-Delayed Control of SISO Systems for Improved Stability Margins},
  journal =	 {Journal of Dynamic Systems, Measurement, and Control},
  year =	 2015,
  volume =	 137,
  number =	 4,
  pages =	 {324-334}
  }


@article{SAK:94,
  title={The asymptotic stability of xn+ 1- axn+ bxn- k= 0},
  author={Kuruklis, Spiridon A},
  journal={Journal of Mathematical Analysis and Applications},
  volume={188},
  number={3},
  pages={719--731},
  year={1994},
  publisher={Elsevier}
}

@article{ISL:05,
author = {I.S. Levitskaya},
title = {A note on the stability oval for $x_{n+1}=x_n+Ax_{n-k}$},
journal = {Journal of Difference Equations and Applications},
volume = {11},
number = {8},
pages = {701-705},
year = {2005},

}

@article{IM-TH:99,
author = {I. Matsunaga and T. Hara},
title = {The asymptotic stability of a two-dimensional linear delay difference equation},
journal = {Dynamic Continuous and Discrete Impulse Systems},
volume = {6},
number = {},
pages = {465–473},
year = {1999},

}
  
  
  
  @Book{WJK-MTN:01,
  author =	 {W. J. Kaczor and M. T. Nowak},
  title =	 {Problems in Mathematical Analysis II: Continuity and Differentiation},
  publisher =	 {AMS},
  year =	 2001,
  ISBN =	 9781470418212
}



@book{DC:05,
  title={Real analysis},
  author={Chatterjee, Dipak},
  year={2012},
  publisher={PHI Learning Pvt. Ltd.}
}

@article{YC-WR-YC:08,
author = {Y. Cao and W. Ren and Y. Chen},
title = {Multi-Agent Consensus Using Both Current and Outdated States},
journal = {Proceedings of the 17th World Congress The International Federation of Automatic Control},
volume = {41},
number = {2},
pages = {2874–2873},
year = {2008},

}


@article{JKH-SMV:01,
author = {J.K. Hale and S.M. Verduyn Lunel},
title = {Effects of small delays on stability and control},
journal = {Operator Theory: Advances and Applications},
volume = {122},
number = {},
pages = {},
year = {2001},

}

@article{UH:95,
author = {U. Heiden	
},
title = {Oscillations and chaos in nonlinear delay differential equations with applications to physiology},
journal = {Proceedings of the first world congress on World congress of nonlinear analysts},
volume = {4},
number = {},
pages = {3095-3107},
year = {1995},

}

@article{OS-SM:00,
author = {O. Santos and S. Mondie	
},
title = {Control laws involving distributed time delays: robustness of the implementation},
journal = acc,
volume = {},
number = {5781-5788},
pages = {3095-3107},
year = {2000},

}

@article{VVA-MD-JFL-JPR:99,
author = {V.V. Assche and M. Dambrine and J.F. Lafay and J.P. Richard	
},
title = {Control laws involving distributed time delays: robustness of the implementation},
journal = cdc,
volume = {},
number = {5781-5788},
pages = {14593-14598},
year = {1999},

}


@article{GB-WJ-RM-FR:75,
author = {G. BERG and W. JULIAN and R. MINES and F. RICHMAN 	
},
title = {The Constructive Jordan Curve Theorem },
journal = {Journal Of Mathematics},
volume = {5},
number = {2},
pages = {225-236},
year = {1975},

}


@Book{HSMC:89,
  author =	 { H.S.M. Coxeter},
  title =	 {Introduction to Geometry},
  publisher =	 {Wiley},
  year =	 1989,
  ISBN =	 0471504580
}


@article{TH-ZL-YS:03,
author = {T. Hu and Z. Lin and Y. Shamash  	
},
title = {On Maximizing the Convergence Rate for Linear Systems
with Input Saturation },
journal = tac,
volume = {48},
number = {6},
pages = {1249 - 1253},
year = {2003},

}


@Article{WY-JC-GC:08,
  author =       {W.Yu and J. Cao and G. Chen},
  title =        {Stability and Hopf bifurcation of a general
delayed recurrent neural network},
  journal =      {{I}{E}{E}{E} {T}ransactions on {N}eural {N}etworks},
  year =         2008,
  volume =       19,
  number =       5,
  pages =        {845--854}
}

@Article{DH-GK-BKS:10,
  author =       {D. Hunt and G. Korniss and B. K. Szymanski},
  title =        {Network synchronization
in a noisy environment with time delays: Fundamental limits and trade offs},
  journal =      {{P}hysical {R}eview {L}etters},
  year =         2010,
  volume =       105,
  number =       6,
  pages =        {068701},
}


@Article{JIG-GO:17,
  author =       {J. I. Ge and G. Orosz},
  title =        {Optimal Control of Connected Vehicle Systems
With Communication Delay and
Driver Reaction Time},
  journal =      {{I}{E}{E}{E} {T}ransactions on {I}ntelligent {T}ransportation {S}ystems},
  year =         2017,
  volume =       18,
  number =       8,
  pages =        {2056--2070}
}


@Article{WK-MC-DM-MK:09,
  author =       {W. Kacem and M. Chaabane and D. Mehdi and M. Kamoun},
  title =        {On $\alpha$-stability criteria of linear systems with multiple time delays},
  journal =      {Journal of Mathematical Sciences},
  year =         2009,
  volume =       161,
  number =       2,
  pages =        {200--207}
}


@Article{MJ:01,
  author =       {M. Jankovic},
  title =        {Control Lyapunov–Razumikhin Functions and Robust
Stabilization of Time Delay Systems},
  journal =      tac,
  year =         2001,
  volume =       46,
  number =       7,
  pages =        {1048-1060}
}

@Article{BL-KS:94,
  author =       {B. Lehman and  K. Shujaee},
  title =        {Delay independent stability conditions and decay estimates for time-varying functional differential equations},
  journal =      tac,
  year =         1994,
  volume =       39,
  number =       8,
  pages =        {1673 - 1676}
}


@Article{HS-TM:06,
  author =       {H. Shinozaki and T. Mori},
  title =        {Robust stability analysis of linear time-delay systems by Lambert W function: Some extreme point results},
  journal =      {Automatica},
  year =         2006,
  volume =       42,
  number =       10,
  pages =        {1791-179}
}



@InProceedings{HM-SSK:17,
  author =	 {H. Moradian and S.S. Kia},
  title =	 {Dynamic average consensus in the presence of communication delay over directed graph topologies},
  booktitle =	 acc,
  year =	 2017,
  pages={4663-4668},
  month={July},
  address =	 {Seattle, WA}
  }




@Article{DB-SM-RV:05,
  author =       {D. Breda and S. Maset and R. Vermiglio },
  title =        {Pseudospectral differencing methods for characteristic roots of delay differential equations},
  journal = {SIAM Journal of Scientific Computing},
  year =         2005,
  volume =       27,
  number =       2,
  pages =        {482–495}
}



@Book{SY-PWN-AGU:10,
  author =	 {S. Yi and P. W. Nelson and A. G. Ulsoy},
  title =	 {Time-Delay Systems: Analysis and Control Using the {L}ambert {W} Function},
  publisher =	 {{W}orld {S}cientific {P}ublishing {C}ompany},
  year =	 2010,
  ISBN =	 9789814307390
}

@Article{SY-PWN-AGU:07,
  author =       {S. Yi and P. W. Nelson and A. G. Ulsoy},
  title =        {Survey on analysis of
time delayed systems via the {L}ambert {W} function},
  journal =      {{D}ynamics of {C}ontinuous, {D}iscrete and {I}mpulsive {S}ystems ({S}eries {A}},
  year =         2007,
  volume =       14,
  pages =        {296–301},
}

@article{SD-JN-AGU:11,
  Author =	 {S. Duan and J. Ni and A. G. Ulsoy},
  Title =	 {Decay function estimation for linear time
delay systems via the {L}ambert {W} function},
  Journal =	 {{J}ournal of {V}ibration and {C}ontrol},
  Number =	 10,
  Pages =	 {1462--1473},
  Volume =	 18,
  Year =	 2011
}


@inproceedings{ROS:07a,
  Author =	 {R. Olfati-Saber},
  Title =	 {Distributed tracking for mobile sensor networks with
                  information-driven mobility},
  Booktitle =	 acc,
  Month =	 {July},
  Pages =	 {4606--4612},
  Address =	 {New York, USA},
  Year =	 2007
}


@inproceedings{PY-RAF-KML:07,
  Author =	 {P. Yang and R. A. Freeman and K. M. Lynch},
  Title =	 {Distributed cooperative active sensing using
                  consensus filters},
  Booktitle =	 icra,
  Month =	 {April},
  Pages =	 {405--410},
  Address =	 {Roma, Italy},
  Year =	 2007
}


@InProceedings{DPS-ROS-RMM:05b,
  author =	 {D. P. Spanos and R. Olfati-Saber and R. M. Murray},
  title =	 {Dynamic consensus on mobile networks},
  booktitle =	 ifacwc,
  year =	 2005,
  month =	 {July},
  address =	 {Prague, Czech Republic},
}


@inproceedings{RAF-PY-KML:06,
  title =	 {Stability and convergence properties of dynamic
                  average consensus estimators},
  author =	 {R. A. Freeman and P. Yang and K. M. Lynch},
  booktitle =	 cdc,
  pages =	 {338--343},
  year =	 2006
}

@inproceedings{HB-RAF-KML:10,
  Author =	 {H. Bai and R. A. Freeman and K. M. Lynch},
  Booktitle =	 cdc,
  Month =	 {December},
  Pages =	 {3104--3109},
  Title =	 {Robust dynamic average consensus of time-varying
                  inputs},
  Address =	 {Atlanta, GA, USA},
  Year =	 2010
}

@Article{MZ-SM:08a,
  author =       {M. Zhu and S. Mart{\'\i}nez},
  title =        {Discrete-time dynamic average consensus},
  journal =      automatica,
  year =         2010,
  volume =       46,
  number =       2,
  pages =        {322--329}
}

@inproceedings{WPMHH-KHJ-PT:12,
  author =	 {W. P. M. H. Heemels and K.H Johansson and P. Tabuada},
  title =	 {An introduction to event-triggered and
                  self-triggered control},
  pages =	 {3270--3285},
  booktitle =	 cdc,
  address =	 {Maui, HI},
  year =	 2012
}


@Article{MMJ-PT:11,
  author =	 {M. Mazo and P. Tabuada},
  title =	 {Decentralized event-triggered control over wireless
                  sensor/actuator networks},
  year =	 2011,
  journal =	 tac,
  volume =	 56,
  number =	 10,
  pages =	 {2456-2461},
}

@Article{XW-MDL:11,
  author =       {X. Wang and M. D. Lemmon},
  title =        {Event-triggering in distributed networked control
                  systems},
  journal =      tac,
  year =         2011,
  volume =       56,
  number =       3,
  pages =        {586-601}
}

@article{DVD-EF-KHJ:12,
  Author =	 {D. V. Dimarogonas and E. Frazzoli and
                  K. H. Johansson},
  Journal =	 tac,
  Number =	 5,
  Pages =	 {1291--1297},
  Title =	 {Distributed Event-Triggered Control for Multi-Agent
                  Systems},
  Volume =	 57,
  Year =	 2012
}

@article{EG-YC-HY-PA-DC:13,
  Author =	 {E. Garcia and Y. Cao and H. Yuc and P. Antsaklis
                  and D. Casbeer},
  Journal =	 ijc,
  Number =	 9,
  Pages =	 {1479--1488},
  Title =	 {Decentralised event-triggered cooperative control
                  with limited communication},
  Volume =	 86,
  Year =	 2013
}


@InProceedings{CN-JC:14-acc,
  author =       {C. Nowzari and J. Cort{\'e}s},
  title =        {Zeno-free, distributed event-triggered communication
                  and control for multi-agent average consensus},
  booktitle =    acc,
  year =         2014,
  address =      {Portland, OR},
  pages =        {2148-2153}
}



@article{MCFD-WPMHH:12,
  Author =	 {M. C. F. Donkers and W. P. M. H. Heemels},
  Journal =	 tac,
  Number =	 6,
  Pages =	 {1362--1376},
  Title =	 {Output-based event-triggered control with guaranteed
                  $\mathcal{L}_\infty$-gain and improved and
                  decentralized event-triggering},
  Volume =	 57,
  Year =	 2012
}


@book{HKK:02,
  AUTHOR =       "H. K. Khalil",
  TITLE =        "Nonlinear Systems",
  PUBLISHER =    ph,
  ADDRESS =      ph_add,
  YEAR =         2002,
  EDITION =      3,
  ISBN =         0132280248
}


@article{RMC-GHG-DEGH-DJJ-DEK:96,
  Author =	 {R. M. Corless and G.H. Gonnet and D. E. G. Hare and D. J. Jeffrey and D. E. Knuth},
  Journal =	 {{A}dvances in {C}omputational {M}athematics},
  Pages =	 {329--359},
  Title =	 {On the {L}ambert {W} function},
  Volume =	 5,
  Year =	 1996
}






@Article{MVK-PJC-MM-CNN:94,
author = {M. V. Kothare and P. J. Campo and M. Morari and C .N. Nett},
title = {A Unified Framework for the Study of Anti-Windup Designs},
journal = automatica,
year = 1994,
volume = 30,
number=12,
pages = {1869-1883}
}

@Article{YYC-ZL:06,
author = {Y. Y. Cao and Z. Lin},
title = {An Anti-Windup Design for Polytopic Systems by a Parameter-Dependent Lyapunov Function Approach},
journal = ijss,
year = 2006,
volume = 37,
number=2,
pages = {129-139}
}

@Article{EFM-MVK-MM:01,
author = {E. F. Mulder and M. V. Kothare and M. Morari},
title = {Multivariable Anti-Windup Controller Synthesis using Linear Matrix Inequalities},
journal = automatica,
year = 2001,
volume = 37,
number=9,
pages = {1407-1416}
}

@Article{GG-JH-IP-ART-MCT-LZ:03,
author = {G. Grimm and J. Hatfield and I. Postlethwaite and A. R. Teel and M. C. Turner and L. Zaccariani},
title = {Anti-Windup for Stable Linear Systems with Input Saturation: An LMI Based Synthesis},
journal = tac,
year = 2003,
volume = 48,
number=9,
pages = {1509-1525}
}

@Article{TH-ART-MCT-LZ:08,
author = {T. Hu and A. R. Teel and M. C. Turner and L. Zaccarian},
title = {Anti-Windup Synthesis for Linear Control Systems with Input Saturation: Achieving Regional, Nonlinear Performance},
journal = automatica,
year = 2008,
volume = 44,
number=2,
pages = {512-519}
}

@InProceedings{TH-ART-MCT-LZ:05,
  author =	 {T. Hu and A. R. Teel and M. C. Turner and L. Zaccarian},
  title =	 {Regional Anti-Windup Compensation for Linear Systems with Input Saturation},
  booktitle =	 acc,
  year =	 2005,
  pages={3397--3402},
  month={June},
  address =	 {Portland, OR}
  }

@InProceedings{TH-ART-MCT-LZ:05acc,
  author =	 {T. Hu and A. R. Teel and M. C. Turner and L. Zaccarian},
  title =	 {Nonlinear $\mathcal{L}_2$ Gain and Regional Analysis for Linear Systems with Anti-Windup Compensation},
  booktitle =	 acc,
  year =	 2005,
  pages={3391--3396},
  month={June},
  address =	 {Portland, OR}
  }

@Article{JMGDS-ST:05,
author = {J. M. Gomes da Silvia and S. Tarbouriechi},
title = {Anti-Windup Design with Guaranteed Regions of Stability: An LMI-Based Approach},
journal = tac,
year = 2005,
volume = 50,
number=1,
pages = {106-111}
}

@Article{FF-SG-LZ:12,
author = {F. Forni and S. Galeani and L. Zaccarian},
title = {Anti-Windup Synthesis for Linear Control Systems with Input Saturation: Achieving Regional, Nonlinear Performance},
journal = auto,
year = 2012,
volume = 48,
number=8,
pages = {1502-1513}
}

@Article{FJ-IK:04,
author = {F. Jabbari and I. Kosë},
title = {Rate and Magnitude-Bounded Actuators: Scheduled Output Feedback Design},
journal = ijrnc,
year = 2004,
volume = 14,
number=13,
pages = {1169-1184}
}

@Article{AB-ZL:03,
author = {A. Bateman and Z. Lin},
title = {An Analysis and Design Method for Linear Systems under Nested Saturation},
journal = scl,
year = 2003,
volume = 48,
number=1,
pages = {41-52}
}

@Article{TN-FJ:00,
author = {T. Nguyen and F. Jabbari},
title = {Output Feedback Controllers for Disturbance Attenuation with Actuator Amplitude and Rate Saturation},
journal = auto,
year = 2000,
volume = 36,
number=9,
pages = {1339-1346}
}

@Article{JS-MT-IP:10,
author = {J. Sofroni and M. Turner and I. Postlethwaite},
title = {Anti-Windup Synthesis for Systems with Rate-Limits using Riccati Equations},
journal = ijc,
year = 2000,
volume = 83,
number=2,
pages = {233-245}
}

@Article{SG-SO-ART-LZ:08,
author = {S. Galeani and S. Onori and A. R. Teel and L. Zaccarian},
title = {A Magnitude and Rate Saturation Model and its Use in the Solution of a Static Anti-Windup Problem},
journal = scl,
year = 2008,
volume = 57,
number=1,
pages = {1-9}
}

@Article{SSK-FJ:11,
author = {S. S. Kia and  F. Jabbari},
title = {Multi-Stage Anti-Windup Compensation for Open Loop Stable Plants},
journal = tac,
year = 2011,
volume = 56,
number=9,
pages = {2166-2172}
}

@Article{TH-ZL-BMC:02,
author = {T. Hu and Z. Lin and B. M. Chen},
title = {Analysis and Design for Discrete-Time Linear Systems Subject to Actuator Saturation},
journal = scl,
year = 2002,
volume = 45,
number=2,
pages = {97-112}
}



@book{LZ-ART:11,
  AUTHOR =       "L. Zaccarian and A. R. Teel",
  TITLE =        "Modern Anti-windup Synthesis",
  PUBLISHER =   princeton,
  ADDRESS =      princetonaddress,
  YEAR =         2011,
  EDITION =      1,
  ISBN =         9780691147321
}

@Book{RCD-RHB:08,
  author =       {R. C. Dorf and R. H. Bishop},
  title =        {Modern Control Systems},
  publisher =    PH,
  year =         2008,
  address =   PH_add,
  edition =      {11th},
  isbn =     {978-0-13-600152-2}
}
@Article{JC-MC-ZG-RL-DZ:14,
  author =	 {J. Chen and  M. Chi and Z.Guan and R.Liao and D. Zhang},
  title =	 {Multiconsensus of Second-Order Multiagent Systems with
                Input Delays},
  journal =	 {{M}athematical {P}roblems in {E}ngineering},
  year =	 2014,
  volume =	 2014,
}

@Article{MV-JH-JV-VF:15,
  author =	 {J. Chen and  M. Chi and Z.Guan and R.Liao and D. Zhang},
  title =	 {Delayed Consensus Problem for Single and
              Double Integrator Systems},
  journal =	 {{M}athematical {P}roblems in {E}ngineering},
  year =	 2015,
  volume =	 2015,
  note={{A}rticle {I}{D} 461098}
}

@Article{PAB-GFT:08,
  author =	 {P. A. Bliman and G. F. Trecate},
  title =	 {Average consensus problems in networks of
              agents with delayed communications},
  journal =	 automatica,
  year =	 2008,
  volume =	 44,
  number =	 8,
  pages =	 {1985-1995}
}

@InProceedings{ZM-YR-YC-ZY:10,
  author =	 {Z. Meng and W. Ren and Y. Cao and Z. You},
  title =	 {Some Stability and Boundedness Conditions for Second-order                Leaderless and Leader-following Consensus with Communication              and Input Delays},
  booktitle =	 acc,
  year =	 2010,
  pages={574-579},
  month={July},
  address =	 {MD, USA}
  }
  
  @Article{WY-GC-MC-WR:13, 
  author =	 {W. Yu and G. Chen and M. Cao and W. Ren},
  title =	 {Delay-Induced Consensus and Quasi-Consensus in
              Multi-Agent Dynamical Systems},
  journal =	 tcs,
  year =	 2013,
  volume =	 60,
  number =	 10,
  pages =	 {2679-2687}
}

@InProceedings{WY-ALB-XW:08,
  author =	 {W. Yang and A. L. Bertozzi and X. Wang},
  title =	 {Stability of a second order consensus                   algorithm with time      delay},
  booktitle =	 cdc,
  year =	 2008,
  pages={2926-2931},
  month={July},
  address =	 {Cancun, Mexico}
  }
  
  @InProceedings{AS-DVD-KHJ:08,
  author =	 {A. Seuret and D. V. Dimarogonas and K. H. Johansson},
  title =	 {Consensus under Communication Delays},
  booktitle =	 cdc,
  year =	 2008,
  pages={4922-4927},
  month={Dec},
  address =	 {Cancun, Mexico}
  }
  
 @Article{ZM-WR-YC-ZY:11,
  author =	 {Z. Mengand and W. Ren and Y. Cao and  Z. You},
  title =	 {Leaderless and Leader-Following Consensus With Communication and Input Delays Under a Directed Network Topology},
  journal =	 tsmcA,
  year =	 2011,
  volume =	 41,
  number =	 1,
  pages =	 {75-88} 
  }
  
 @Book{WR-RWB:08,
  author =       {W. Ren and R. W. Beard},
  title =        {Distributed Consensus in Multi-vehicle Cooperative Control},
  publisher =    sv,
  year =         2008,
  isbn =     {9978-1-84800-014-8}
} 

@Article{CNH-TC:14,
  author =	 {C. N. Hadjicostis and T. Charalambous},
  title =	 {Average Consensus in the Presence of Delays in Directed Graph Topologies},
  journal =	 tac,
  year =	 2004,
  volume =	 59,
  number =	 3,
  pages =	 {1520--1533}
}

@Article{GS-YH-KHJ:12,
  author =	 {G. Shi and Y. Hong  and K. H. Johansson },
  title =	 {Connectivity and set tracking of multi-agent systems guided by multiple moving leaders},
  journal =	 tac,
  year =	 2012,
  volume =	 57,
  number =	 3,
  pages =	 {663--676}
}

@Article{WR:07,
  author =	 {W. Ren},
  title =	 {Multi-vehicle consensus with a time-varying reference state},
  journal =	 scl,
  year =	 2007,
  volume =	 56,
  number =	 {7--8},
  pages =	 {474--483}
}
@Article{HS-TM:06,
  author =	 {H. Shinozaki and T. Mori},
  title =	 {Robust stability analysis of linear time-delay systems by {L}ambert {W} function: Some extreme             point results},
  journal =	 automatica,
  year =	 2006,
  volume =	 42,
  number =	 10,
  pages =	 {1791--1799}
}


@Article{WR-RWB:05,
  author =	 {W. Reb and R. W. Beard},
  title =	 {Consensus seeking in multi-agent systems
under dynamically changing interaction topologies},
  journal =	 tac,
  year =	 2005,
  volume =	 50,
  number =	 5,
  pages =	 {655-661}
}
@Article{MB:87,
 author =	 {M. Buslowicz},
  title =	 {Comments on ‘Stability test and stability conditions for delay differential systems’,},
  journal =	 ijcontr,
  year =	 1987,
  volume =	 45,
  number =	 2,
  pages =	 {745-751}
}

@article{MB:87b,
author = { M.   BUSLOWICZ},
title = {Simple stability criterion for a class of delay differential systems},
journal = {International {J}ournal of {S}ystems {S}cience},
volume = {18},
number = {5},
pages = {993-995},
year = {1987},
doi = {10.1080/00207728708964026},

URL = { 
        http://dx.doi.org/10.1080/00207728708964026
    
},
eprint = { 
        http://dx.doi.org/10.1080/00207728708964026
    
}
,
    abstract = { A linear delay differential system of the form [xdot](t) = Ax(t â h), x(t) â Rn is considered and simple analytic necessary and sufficient conditions for asymptotic stability are derived. The criterion is given in terms of the eigenvalues of the matrix A and the time delay h. }
}

@Article{DB-DV-LST-JH:05,
  author =	 {D.Bratsun and D. Volfson and L.S. Tsimring and J. Hasty},
  title =	 {Delay-induced stochastic oscillations in gene regulation},
  journal =	 {Proceedings of the National Academy of Sciences},
  year =	 2005,
  volume =	 102,
  number =	 41,
  pages =	 {14593-14598}
}

@Article{GO-JM-RMM:08,
  author =	 {G. Orosz and J. Moehlis AND R. M. Murray},
  title =	 {Controlling biological networks by time-delayed signals},
  journal =	 {Transaction of the Royal Society},
  year =	 2008,
  volume =	 9,
  number =	 12,
  pages =	 {981-991}
}

@Book{EF:14,
  author =       {Emilia Fridman},
  title =        {Introduction to time-delay systems},
  publisher =    {Springer International Publishing},
  year =         2014,
  isbn =     {978-3-319-09392-5}
} 

@Book{MW-YH-JS:10,
  author =       {M. Wu and Y. He and J. She},
  title =        {Stability Analysis and Robust Control of Time-Delay Systems},
  publisher =    {Springer-Verlag Berlin Heidelberg},
  year =         2010,
  isbn =     {978-3-642-03037-6}
} 


@Article{NS-RDJ:88,
  author =	 {G. Orosz and J. Moejlis AND R. M. Murray},
  title =	 {Exploitation of time delays for improved process control},
  journal =	 ijc,
  year =	 1988,
  volume =	 48,
  number =	 3,
  pages =	 {1137-1152}
}

@Article{JL-SS-RMM:10,
  author =	 {J. Lavaei and S. Sojoudi AND R. M. Murray},
  title =	 {Simple delay-based implementation of continious-time controllers},
  journal =	 {Philosophical Transactions of Royal Society A},
  year =	 2010,
  volume =	 368,
  pages =	 {430-454}
}

@Article{XBL:02,
  author =	 {X.B. Liang},
  title =	 {Effect of Transmission Delay on the Rate of Convergence of a Class of Nonlinear Contractive Dynamical Systems},
  journal =	 {IEEE TRANSACTIONS ON NEURAL NETWORKS},
  year =	 2002,
  volume =	 13,
  number =	 1,
  pages =	 {244-248}
}

@Book{WJR:93,
  author =       {W.J. Rugh},
  title =        {Linear System Theory},
  publisher =    {New Jersey: Prentice Hall},
  year =         1993,
  isbn =     {0-13-441205-2}
} 

@Article{NO-BTH:94,
  author =	 {N. Olgac and B.T. Holm-Hansen},
  title =	 {A Novel Active Vibration Absorption Technique: Delayed Resonator},
  journal =	 jsv,
  year =	 1994,
  volume =	 176,
  number =	 1,
  pages =	 {93-104}
}

@Article{KP:92,
  author =	 {K. Pyragas},
  title =	 {Continuous control of chaos by self-controlling feedback},
  journal =	 {Physics Letters A},
  year =	 1992,
  volume =	 170,
  number =	 6,
  pages =	 {421-428}
}

@Article{KP:95,
  author =	 {K. Pyragas},
  title =	 {Control of chaos via extended delay feedback},
  journal =	 {Physics Letters A},
  year =	 1995,
  volume =	 206,
  number =	 6,
  pages =	 {323-330}
}

@Article{KP:95,
  author =	 {K. Pyragas},
  title =	 {Control of chaos via extended delay feedback},
  journal =	 {Physics Letters A},
  year =	 1995,
  volume =	 206,
  number =	 6,
  pages =	 {323-330}
}

@Article{KP:95,
  author =	 {K. Pyragas},
  title =	 {Control of chaos via extended delay feedback},
  journal =	 {Physics Letters A},
  year =	 1995,
  volume =	 206,
  number =	 6,
  pages =	 {323-330}
}

@Article{BY-CDM:95,
  author =	 {B. Yang and C.D. Mote},
  title =	 {On Time Delay in Noncolocated Control of Flexible Mechanical Systems},
  journal =	 {Journal of Dynamic Systems Measurement and Control},
  year =	 1995,
  volume =	 114,
  number =	 3,
  pages =	 {409-415}
}

@Article{AGU:15,
  author =	 {A. G. Ulsoy},
  title =	 {Time-Delayed Control of SISO Systems for Improved Stability Margins},
  journal =	 {Journal of Dynamic Systems, Measurement, and Control},
  year =	 2015,
  volume =	 137,
  number =	 4,
  pages =	 {324-334}
  }
  
  @Article{CH-YCC:05,
  author =	 {C.Hwang and Y.C. Cheng},
  title =	 {A note on the use of the {L}ambert {W} function in the stability analysis of time-delay systems},
  journal =	 {Automatica},
  year =	 2005,
  volume =	 41,
  pages =	 {1979--1985}
}

@article{IM-TH:99,
author = {I. Matsunaga and T. Hara},
title = {The asymptotic stability of a two-dimensional linear delay difference equation},
journal = {Dynamic Continuous and Discrete Impulse Systems},
volume = {6},
number = {},
pages = {465–473},
year = {1999},

}
  
  
  
  @Book{WJK-MTN:01,
  author =	 {W. J. Kaczor and M. T. Nowak},
  title =	 {Problems in Mathematical Analysis II: Continuity and Differentiation},
  publisher =	 {AMS},
  year =	 2001,
  ISBN =	 9781470418212
}

@article{YC-WR-YC:08,
author = {Y. Cao and W. Ren and Y. Chen},
title = {Multi-Agent Consensus Using Both Current and Outdated States},
journal = {Proceedings of the 17th World Congress The International Federation of Automatic Control},
volume = {41},
number = {2},
pages = {2874–2873},
year = {2008},

}


@article{JKH-SMV:01,
author = {J.K. Hale and S.M. Verduyn Lunel},
title = {Effects of small delays on stability and control},
journal = {Operator Theory: Advances and Applications},
volume = {122},
number = {},
pages = {},
year = {2001},

}

@article{UH:95,
author = {U. Heiden	
},
title = {Oscillations and chaos in nonlinear delay differential equations with applications to physiology},
journal = {Proceedings of the first world congress on World congress of nonlinear analysts},
volume = {4},
number = {},
pages = {3095-3107},
year = {1995},

}

@article{OS-SM:00,
author = {O. Santos and S. Mondie	
},
title = {Control laws involving distributed time delays: robustness of the implementation},
journal = acc,
volume = {},
number = {5781-5788},
pages = {3095-3107},
year = {2000},

}

@article{VVA-MD-JFL-JPR:99,
author = {V.V. Assche and M. Dambrine and J.F. Lafay and J.P. Richard	
},
title = {Control laws involving distributed time delays: robustness of the implementation},
journal = cdc,
volume = {},
number = {5781-5788},
pages = {14593-14598},
year = {1999},

}


@article{GB-WJ-RM-FR:75,
author = {G. BERG and W. JULIAN and R. MINES and F. RICHMAN 	
},
title = {The Constructive Jordan Curve Theorem },
journal = {Journal Of Mathematics},
volume = {5},
number = {2},
pages = {225-236},
year = {1975},

}


@Book{HSMC:89,
  author =	 { H.S.M. Coxeter},
  title =	 {Introduction to Geometry},
  publisher =	 {Wiley},
  year =	 1989,
  ISBN =	 0471504580
}


@article{TH-ZL-YS:03,
author = {T. Hu and Z. Lin and Y. Shamash  	
},
title = {On Maximizing the Convergence Rate for Linear Systems
with Input Saturation },
journal = tac,
volume = {48},
number = {6},
pages = {1249 --1253},
year = {2003},

}


@Article{WY-JC-GC:08,
  author =       {W.Yu and J. Cao and G. Chen},
  title =        {Stability and {H}opf bifurcation of a general
delayed recurrent neural network},
  journal =      {{I}{E}{E}{E} {T}ransactions on {N}eural {N}etworks},
  year =         2008,
  volume =       19,
  number =       5,
  pages =        {845--854}
}

@Article{DH-GK-BKS:10,
  author =       {D. Hunt and G. Korniss and B. K. Szymanski},
  title =        {Network synchronization
in a noisy environment with time delays: Fundamental limits and trade offs},
  journal =      {{P}hysical {R}eview {L}etters},
  year =         2010,
  volume =       105,
  number =       6,
  pages =        {068701},
}


@Article{JIG-GO:17,
  author =       {J. I. Ge and G. Orosz},
  title =        {Optimal Control of Connected Vehicle Systems
With Communication Delay and
Driver Reaction Time},
  journal =      {{I}{E}{E}{E} {T}ransactions on {I}ntelligent {T}ransportation {S}ystems},
  year =         2017,
  volume =       18,
  number =       8,
  pages =        {2056--2070}
}


@Article{WK-MC-DM-MK:09,
  author =       {W. Kacem and M. Chaabane and D. Mehdi and M. Kamoun},
  title =        {On $\alpha$-stability criteria of linear systems with multiple time delays},
  journal =      {Journal of Mathematical Sciences},
  year =         2009,
  volume =       161,
  number =       2,
  pages =        {200--207}
}


@Article{MJ:01,
  author =       {M. Jankovic},
  title =        {Control Lyapunov–Razumikhin Functions and Robust
Stabilization of Time Delay Systems},
  journal =      tac,
  year =         2001,
  volume =       46,
  number =       7,
  pages =        {1048-1060}
}

@Article{BL-KS:94,
  author =       {B. Lehman and  K. Shujaee},
  title =        {Delay independent stability conditions and decay estimates for time-varying functional differential equations},
  journal =      tac,
  year =         1994,
  volume =       39,
  number =       8,
  pages =        {1673 - 1676}
}

@Article{VNP-PN:06,
  author =       {V.N Phat and  P. Niamsup},
  title =        {Stability of linear time-varying delay systems and applications to control problems},
  journal =      {Journal of Computational and Applied Mathematics},
  year =         2006,
  volume =       194,
  number =       2,
  pages =        {343-356}
}


@Article{HS-TM:06,
  author =       {H. Shinozaki and T. Mori},
  title =        {Robust stability analysis of linear time-delay systems by {L}ambert {W} function: Some extreme point results},
  journal =      {Automatica},
  year =         2006,
  volume =       42,
  number =       10,
  pages =        {1791--179}
}


@Article{SY-PWN-AGU:07a,
  author =       {S. Yi and P.~W. Nelson and A.~G. Ulsoy },
  title =        {Survey on analysis of time delayed systems via the {L}ambert {W} function},
  journal =      {Dynamics of Continuous, Discrete and Impulsive Systems},
  year =         2007,
  volume =       14,
  number =       2,
  pages =        {296-301}
}

@Article{SY-PWN-AGU:07b,
  author =       {S. Yi and P.W. Nelson and A.~G. Ulsoy },
  title =        {Delay differential
equations via the matrix {L}ambert {W} function and bifurcation analysis: application to machine tool chatter},
  journal =      {Mathematical Biosciences and Engineering},
  year =         2007,
  volume =       14,
  pages =        {355-368}
}

@Article{HM-SSK:18,
  author =       {H. Moradian and S.~S. Kia },
  title =        {On Robustness Analysis of a Dynamic Average Consensus Algorithm to Communication Delay},
  journal =     tcns,
  volume = 6,
  number=2, 
  pages= {633--641},
year =  2018,
  doi = {10.1080/00207728708964026}
}



@InProceedings{HM-SSK:17,
  author =	 {H. Moradian and S.~S. Kia},
  title =	 {Dynamic average consensus in the presence of communication delay over directed graph topologies},
  booktitle =	 acc,
  year =	 2017,
  pages={4663-4668},
  month={July},
  address =	 {Seattle, WA}
  }




@Article{DB-SM-RV:05,
  author =       {D. Breda and S. Maset and R. Vermiglio },
  title =        {Pseudospectral differencing methods for characteristic roots of delay differential equations},
  journal = {SIAM Journal of Scientific Computing},
  year =         2005,
  volume =       27,
  number =       2,
  pages =        {482-495}
}



@article{BG-SM-MHS:98,
author = {B. Ghosh and S. Muthukrishnan and M.~H. Schultz},
title = {First and Second-order Diffusive Methods for Rapid,
Coarse, Distributed Load Balancing},
journal = {Theory of Computing
Systems},
volume = {31},
number = {},
pages = {331-354},
year = {1998},

}


@article{MC-DAS-EMY:06,
author = {M. Cao and D.~A. Spielman and E.~M. Yeh},
title = {Accelerated Gossip Algorithms for Distributed
Computation},
journal = {In Proceedings of the 44th Annual Allerton Conference},
volume = {},
number = {},
pages = {952-959},
year = {2006},

}


@InProceedings{HM-SSK:18b,
  author =	 {H. Moradian and S.~S. Kia},
  title =	 {A study of the rate of convergence increase due
to time delay for a class of linear systems},
  booktitle =	 cdc,
  year =	 2018,
  pages={},
  month={December},
  address =	 {FL, USA}
  }
  
    @Book{TI-TE-GO:17,
  author =	 {T. Insperger and T. Ersal and  G. Orosz},
  title =	 {Time Delay Systems},
  publisher =	 {Springer},
  year =	 2017,
  ISBN =	 {978-3-319-53426-8}
}

, “,” , vol. s1-25, no. 3, pp. , 1950


@article{NDH:50,
author = {N. D. Hayes},
title = {Roots of the transcendental equation associated with a
certain differencedifferential equation},
journal = {Journal Of The London Mathematical Society},
volume = {25},
number = {3},
pages = {226--232},
year = {1950}
}


@Article{HM-SSK:22tcns,
  author =       {H. Moradian and S.~S. Kia},
  title =        {A Study on Accelerating Average Consensus Algorithms
Using Delayed Feedback},
  year =         2022,
  journal={Accepted in {I}{E}{E}{E} {T}ransactions on {C}ontrol of {N}etwork {S}ystems, available online at \url{https://arxiv.org/pdf/1912.04442.pdf}} 
}

@Article{SSK-BVS-JC-RAF-KML-SM:19,
  author =       {S.~S. Kia and B. Van Scoy and J. Cort{\'e}s and R.~A. Freeman and K.~M. Lynch and S. Mart{\'\i}nez},
  title =        {Tutorial on Dynamic Average Consensus: The Problem, Its Applications, and the Algorithms},
  year =         2019,
  volume={39}, 
  number={3}, 
  pages={40--72}, 
  journal=csm, 
}


	@article{WQ-RS:13,
	author={W. Qiao and R. Sipahi },
	title={A linear time-invariant consensus dynamics with homogeneous delays: analytical study and synthesis of rightmost eigenvalues},
	journal={SIAM Journal on Control and Optimization},
	year=2013,
	volume={51},
		number = {5},
	pages={3971--3991}
	
	}
	
	@Article{HM-SSK:19,
  author =       {H. Moradian and S. S. Kia },
  title =        {On Robustness Analysis of a Dynamic Average Consensus Algorithm to Communication Delay},
  journal =     tcns,
  volume = {6},
number = {2},
pages = {633-641},
year = {2019},
}


@incollection{AGU:17,
  title={Improving stability margins via time-delayed vibration control},
  author={Ulsoy, A Galip},
  booktitle={Time Delay Systems},
  pages={235--247},
  year={2017},
  publisher={Springer}
}

@Book{SY-PWN-AGU:10,
  author =	 {S. Yi and P. W. Nelson and A. G. Ulsoy},
  title =	 {Time-Delay Systems: Analysis and Control Using the {L}ambert {W} Function},
  publisher =	 {{W}orld {S}cientific {P}ublishing {C}ompany},
  year =	 2010,
  ISBN =	 9789814307390
}

@Article{SY-PWN-AGU:07,
  author =       {S. Yi and P. W. Nelson and A. G. Ulsoy},
  title =        {Survey on analysis of
time delayed systems via the Lambert W function},
  journal =      {{D}ynamics of {C}ontinuous, {D}iscrete and {I}mpulsive {S}ystems ({S}eries {A}},
  year =         2007,
  volume =       14,
  pages =        {296–301},
}

@article{SD-JN-AGU:11,
  Author =	 {S. Duan and J. Ni and A. G. Ulsoy},
  Title =	 {Decay function estimation for linear time
delay systems via the {L}ambert {W} function},
  Journal =	 {{J}ournal of {V}ibration and {C}ontrol},
  Number =	 10,
  Pages =	 {1462--1473},
  Volume =	 18,
  Year =	 2011
}


@ARTICLE{BVS-RAF-KML:18, 
author={B. {Van Scoy} and R. A. {Freeman} and K. M. {Lynch}}, 
journal={IEEE Control Systems Letters}, 
title={The Fastest Known Globally Convergent First-Order Method for Minimizing Strongly Convex Functions}, 
year={2018}, 
volume={2}, 
number={1}, 
pages={49-54}, 
month={Jan}
}


@inproceedings{ROS:07a,
  Author =	 {R. Olfati-Saber},
  Title =	 {Distributed tracking for mobile sensor networks with
                  information-driven mobility},
  Booktitle =	 acc,
  Month =	 {July},
  Pages =	 {4606--4612},
  Address =	 {New York, USA},
  Year =	 2007
}


@inproceedings{PY-RAF-KML:07,
  Author =	 {P. Yang and R. A. Freeman and K. M. Lynch},
  Title =	 {Distributed cooperative active sensing using
                  consensus filters},
  Booktitle =	 icra,
  Month =	 {April},
  Pages =	 {405--410},
  Address =	 {Roma, Italy},
  Year =	 2007
}


@InProceedings{DPS-ROS-RMM:05b,
  author =	 {D. P. Spanos and R. Olfati-Saber and R. M. Murray},
  title =	 {Dynamic consensus on mobile networks},
  booktitle =	 ifacwc,
  year =	 2005,
  month =	 {July},
  address =	 {Prague, Czech Republic},
}

@article{WR-UMA:17,
	author={W. Ren and U.~M. Al-Saggaf},
	title={Distributed {K}alman-{B}ucy Filter With Embedded Dynamic Averaging Algorithm},
	journal={{I}{E}{E}{E} {S}ystems {J}ournal},
	year=2017,
	volume={},
	number = 99,
	pages={1--9}
}

@Article{AC-JC:14-auto,
  author =       {A. Cherukuri and J. Cort{\'e}s},
  title =        {Initialization-free distributed coordination for
                  economic dispatch under varying loads and generator
                  commitment},
  journal =      automatica,
  volume =74,
  pages ={183--193},
  year =         2016,
  number=12
}



@inproceedings{RAF-PY-KML:06,
  title =	 {Stability and convergence properties of dynamic
                  average consensus estimators},
  author =	 {R. A. Freeman and P. Yang and K. M. Lynch},
  booktitle =	 cdc,
  pages =	 {338--343},
  year =	 2006
}

@inproceedings{HB-RAF-KML:10,
  Author =	 {H. Bai and R. A. Freeman and K. M. Lynch},
  Booktitle =	 cdc,
  Month =	 {December},
  Pages =	 {3104--3109},
  Title =	 {Robust dynamic average consensus of time-varying
                  inputs},
  Address =	 {Atlanta, GA, USA},
  Year =	 2010
}

@Article{MZ-SM:08a,
  author =       {M. Zhu and S. Mart{\'\i}nez},
  title =        {Discrete-time dynamic average consensus},
  journal =      automatica,
  year =         2010,
  volume =       46,
  number =       2,
  pages =        {322--329}
}
@article{SSK:17,
	author={S. S. Kia},
	title={Distributed optimal in-network resource allocation algorithm design via a control theoretic approach},
	journal=scl,
	year=2017,
	volume=107,
	pages={49–-57}
}
@article{MF:73,
	author={M. Fiedler
},
	title={Algebraic Connectivity of Graphs},
	journal={{C}zechoslovak {M}athematical {J}ournal},
	year=1973,
	number={2},
	volume={23},
	pages={298–-305}
}


@article{HM-SSK:20tac,
  title={On the Positive Effect of Delay on the Rate of Convergence of a Class of Linear Time-Delayed Systems},
  author={H. Moradian and Solmaz S. Kia},
  journal=tac,
  volume= 65,
  number= 11,
  pages= {4832--4839},
  year={2020}
}







}
@inproceedings{WPMHH-KHJ-PT:12,
  author =	 {W. P. M. H. Heemels and K.H Johansson and P. Tabuada},
  title =	 {An introduction to event-triggered and
                  self-triggered control},
  pages =	 {3270--3285},
  booktitle =	 cdc,
  address =	 {Maui, HI},
  year =	 2012
}


@Article{MMJ-PT:11,
  author =	 {M. Mazo and P. Tabuada},
  title =	 {Decentralized event-triggered control over wireless
                  sensor/actuator networks},
  year =	 2011,
  journal =	 tac,
  volume =	 56,
  number =	 10,
  pages =	 {2456-2461},
}

@Article{XW-MDL:11,
  author =       {X. Wang and M. D. Lemmon},
  title =        {Event-triggering in distributed networked control
                  systems},
  journal =      tac,
  year =         2011,
  volume =       56,
  number =       3,
  pages =        {586-601}
}

@article{DVD-EF-KHJ:12,
  Author =	 {D. V. Dimarogonas and E. Frazzoli and
                  K. H. Johansson},
  Journal =	 tac,
  Number =	 5,
  Pages =	 {1291--1297},
  Title =	 {Distributed Event-Triggered Control for Multi-Agent
                  Systems},
  Volume =	 57,
  Year =	 2012
}

@article{EG-YC-HY-PA-DC:13,
  Author =	 {E. Garcia and Y. Cao and H. Yuc and P. Antsaklis
                  and D. Casbeer},
  Journal =	 ijc,
  Number =	 9,
  Pages =	 {1479--1488},
  Title =	 {Decentralised event-triggered cooperative control
                  with limited communication},
  Volume =	 86,
  Year =	 2013
}


@InProceedings{CN-JC:14-acc,
  author =       {C. Nowzari and J. Cort{\'e}s},
  title =        {Zeno-free, distributed event-triggered communication
                  and control for multi-agent average consensus},
  booktitle =    acc,
  year =         2014,
  address =      {Portland, OR},
  pages =        {2148-2153}
}



@article{MCFD-WPMHH:12,
  Author =	 {M. C. F. Donkers and W. P. M. H. Heemels},
  Journal =	 tac,
  Number =	 6,
  Pages =	 {1362--1376},
  Title =	 {Output-based event-triggered control with guaranteed
                  $\mathcal{L}_\infty$-gain and improved and
                  decentralized event-triggering},
  Volume =	 57,
  Year =	 2012
}


@book{HKK:02,
  AUTHOR =       "H. K. Khalil",
  TITLE =        "Nonlinear Systems",
  PUBLISHER =    ph,
  ADDRESS =      ph_add,
  YEAR =         2002,
  EDITION =      3,
  ISBN =         0132280248
}


@article{RMC-GHG-DEGH-DJJ-DEK:96,
  Author =	 {R. M. Corless and G.H. Gonnet and D. E. G. Hare and D. J. Jeffrey and D. E. Knuth},
  Journal =	 {{A}dvances in {C}omputational {M}athematics},
  Pages =	 {329--359},
  Title =	 {On the {L}ambert {W} function},
  Volume =	 5,
  Year =	 1996
}






@Article{MVK-PJC-MM-CNN:94,
author = {M. V. Kothare and P. J. Campo and M. Morari and C .N. Nett},
title = {A Unified Framework for the Study of Anti-Windup Designs},
journal = automatica,
year = 1994,
volume = 30,
number=12,
pages = {1869-1883}
}

@Article{YYC-ZL:06,
author = {Y. Y. Cao and Z. Lin},
title = {An Anti-Windup Design for Polytopic Systems by a Parameter-Dependent Lyapunov Function Approach},
journal = ijss,
year = 2006,
volume = 37,
number=2,
pages = {129-139}
}

@Article{EFM-MVK-MM:01,
author = {E. F. Mulder and M. V. Kothare and M. Morari},
title = {Multivariable Anti-Windup Controller Synthesis using Linear Matrix Inequalities},
journal = automatica,
year = 2001,
volume = 37,
number=9,
pages = {1407-1416}
}

@Article{GG-JH-IP-ART-MCT-LZ:03,
author = {G. Grimm and J. Hatfield and I. Postlethwaite and A. R. Teel and M. C. Turner and L. Zaccariani},
title = {Anti-Windup for Stable Linear Systems with Input Saturation: An LMI Based Synthesis},
journal = tac,
year = 2003,
volume = 48,
number=9,
pages = {1509-1525}
}

@Article{TH-ART-MCT-LZ:08,
author = {T. Hu and A. R. Teel and M. C. Turner and L. Zaccarian},
title = {Anti-Windup Synthesis for Linear Control Systems with Input Saturation: Achieving Regional, Nonlinear Performance},
journal = automatica,
year = 2008,
volume = 44,
number=2,
pages = {512-519}
}

@InProceedings{TH-ART-MCT-LZ:05,
  author =	 {T. Hu and A. R. Teel and M. C. Turner and L. Zaccarian},
  title =	 {Regional Anti-Windup Compensation for Linear Systems with Input Saturation},
  booktitle =	 acc,
  year =	 2005,
  pages={3397--3402},
  month={June},
  address =	 {Portland, OR}
  }

@InProceedings{TH-ART-MCT-LZ:05acc,
  author =	 {T. Hu and A. R. Teel and M. C. Turner and L. Zaccarian},
  title =	 {Nonlinear $\mathcal{L}_2$ Gain and Regional Analysis for Linear Systems with Anti-Windup Compensation},
  booktitle =	 acc,
  year =	 2005,
  pages={3391--3396},
  month={June},
  address =	 {Portland, OR}
  }

@Article{JMGDS-ST:05,
author = {J. M. Gomes da Silvia and S. Tarbouriechi},
title = {Anti-Windup Design with Guaranteed Regions of Stability: An LMI-Based Approach},
journal = tac,
year = 2005,
volume = 50,
number=1,
pages = {106-111}
}

@Article{FF-SG-LZ:12,
author = {F. Forni and S. Galeani and L. Zaccarian},
title = {Anti-Windup Synthesis for Linear Control Systems with Input Saturation: Achieving Regional, Nonlinear Performance},
journal = auto,
year = 2012,
volume = 48,
number=8,
pages = {1502-1513}
}

@Article{AB-ZL:03,
author = {A. Bateman and Z. Lin},
title = {An Analysis and Design Method for Linear Systems under Nested Saturation},
journal = scl,
year = 2003,
volume = 48,
number=1,
pages = {41-52}
}

@Article{TN-FJ:00,
author = {T. Nguyen and F. Jabbari},
title = {Output Feedback Controllers for Disturbance Attenuation with Actuator Amplitude and Rate Saturation},
journal = auto,
year = 2000,
volume = 36,
number=9,
pages = {1339-1346}
}

@Article{JS-MT-IP:10,
author = {J. Sofroni and M. Turner and I. Postlethwaite},
title = {Anti-Windup Synthesis for Systems with Rate-Limits using Riccati Equations},
journal = ijc,
year = 2000,
volume = 83,
number=2,
pages = {233-245}
}

@Article{SG-SO-ART-LZ:08,
author = {S. Galeani and S. Onori and A. R. Teel and L. Zaccarian},
title = {A Magnitude and Rate Saturation Model and its Use in the Solution of a Static Anti-Windup Problem},
journal = scl,
year = 2008,
volume = 57,
number=1,
pages = {1-9}
}

@Article{SSK-FJ:11,
author = {S. S. Kia and  F. Jabbari},
title = {Multi-Stage Anti-Windup Compensation for Open Loop Stable Plants},
journal = tac,
year = 2011,
volume = 56,
number=9,
pages = {2166-2172}
}

@Article{TH-ZL-BMC:02,
author = {T. Hu and Z. Lin and B. M. Chen},
title = {Analysis and Design for Discrete-Time Linear Systems Subject to Actuator Saturation},
journal = scl,
year = 2002,
volume = 45,
number=2,
pages = {97-112}
}

@Book{RCD-RHB:08,
  author =       {R. C. Dorf and R. H. Bishop},
  title =        {Modern Control Systems},
  publisher =    PH,
  year =         2008,
  address =   PH_add,
  edition =      {11th},
  isbn =     {978-0-13-600152-2}
}
@Article{JC-MC-ZG-RL-DZ:14,
  author =	 {J. Chen and  M. Chi and Z.Guan and R.Liao and D. Zhang},
  title =	 {Multiconsensus of Second-Order Multiagent Systems with
                Input Delays},
  journal =	 {{M}athematical {P}roblems in {E}ngineering},
  year =	 2014,
  volume =	 2014,
}

@Article{MV-JH-JV-VF:15,
  author =	 {J. Chen and  M. Chi and Z.Guan and R.Liao and D. Zhang},
  title =	 {Delayed Consensus Problem for Single and
              Double Integrator Systems},
  journal =	 {{M}athematical {P}roblems in {E}ngineering},
  year =	 2015,
  volume =	 2015,
  note={{A}rticle {I}{D} 461098}
}

@Article{PAB-GFT:08,
  author =	 {P. A. Bliman and G. F. Trecate},
  title =	 {Average consensus problems in networks of
              agents with delayed communications},
  journal =	 automatica,
  year =	 2008,
  volume =	 44,
  number =	 8,
  pages =	 {1985-1995}
}

@InProceedings{ZM-YR-YC-ZY:10,
  author =	 {Z. Meng and W. Ren and Y. Cao and Z. You},
  title =	 {Some Stability and Boundedness Conditions for Second-order                Leaderless and Leader-following Consensus with Communication              and Input Delays},
  booktitle =	 acc,
  year =	 2010,
  pages={574-579},
  month={July},
  address =	 {MD, USA}
  }
  
  @Article{WY-GC-MC-WR:13,
  author =	 {W. Yu and G. Chen and M. Cao and W. Ren},
  title =	 {Delay-Induced Consensus and Quasi-Consensus in
              Multi-Agent Dynamical Systems},
  journal =	 tcs,
  year =	 2013,
  volume =	 60,
  number =	 10,
  pages =	 {2679-2687}
}

@InProceedings{WY-ALB-XW:08,
  author =	 {W. Yang and A. L. Bertozzi and X. Wang},
  title =	 {Stability of a second order consensus                   algorithm with time      delay},
  booktitle =	 cdc,
  year =	 2008,
  pages={2926-2931},
  month={July},
  address =	 {Cancun, Mexico}
  }
  
  @InProceedings{AS-DVD-KHJ:08,
  author =	 {A. Seuret and D. V. Dimarogonas and K. H. Johansson},
  title =	 {Consensus under Communication Delays},
  booktitle =	 cdc,
  year =	 2008,
  pages={4922-4927},
  month={Dec},
  address =	 {Cancun, Mexico}
  }
  
 @Article{ZM-WR-YC-ZY:11,
  author =	 {Z. Mengand and W. Ren and Y. Cao and  Z. You},
  title =	 {Leaderless and Leader-Following Consensus With Communication and Input Delays Under a Directed Network Topology},
  journal =	 tsmcA,
  year =	 2011,
  volume =	 41,
  number =	 1,
  pages =	 {75-88} 
  }
  
 @Book{WR-RWB:08,
  author =       {W. Ren and R. W. Beard},
  title =        {Distributed Consensus in Multi-vehicle Cooperative Control},
  publisher =    sv,
  year =         2008,
  isbn =     {9978-1-84800-014-8}
} 

@Article{CNH-TC:14,
  author =	 {C. N. Hadjicostis and T. Charalambous},
  title =	 {Average Consensus in the Presence of Delays in Directed Graph Topologies},
  journal =	 tac,
  year =	 2004,
  volume =	 59,
  number =	 3,
  pages =	 {1520--1533}
}

@Article{GS-YH-KHJ:12,
  author =	 {G. Shi and Y. Hong  and K. H. Johansson },
  title =	 {Connectivity and set tracking of multi-agent systems guided by multiple moving leaders},
  journal =	 tac,
  year =	 2012,
  volume =	 57,
  number =	 3,
  pages =	 {663--676}
}

@Article{WR:07,
  author =	 {W. Ren},
  title =	 {Multi-vehicle consensus with a time-varying reference state},
  journal =	 scl,
  year =	 2007,
  volume =	 56,
  number =	 {7--8},
  pages =	 {474--483}
}
@Article{HS-TM:06,
  author =	 {H. Shinozaki and T. Mori},
  title =	 {Robust stability analysis of linear time-delay systems by {L}ambert {W} function: Some extreme             point results},
  journal =	 automatica,
  year =	 2006,
  volume =	 42,
  number =	 10,
  pages =	 {1791--1799}
}


@Article{WR-RWB:05,
  author =	 {W. Reb and R. W. Beard},
  title =	 {Consensus seeking in multi-agent systems
under dynamically changing interaction topologies},
  journal =	 tac,
  year =	 2005,
  volume =	 50,
  number =	 5,
  pages =	 {655-661}
}


@article{MB:87,
author = { M.   Buslowicz},
title = {Simple stability criterion for a class of delay differential systems},
journal = {International {J}ournal of {S}ystems {S}cience},
volume = {18},
number = {5},
pages = {993-995},
year = {1987},
doi = {10.1080/00207728708964026},

URL = { 
        http://dx.doi.org/10.1080/00207728708964026
    
},
eprint = { 
        http://dx.doi.org/10.1080/00207728708964026
    
}
,
    abstract = { A linear delay differential system of the form [xdot](t) = Ax(t â h), x(t) â Rn is considered and simple analytic necessary and sufficient conditions for asymptotic stability are derived. The criterion is given in terms of the eigenvalues of the matrix A and the time delay h. }
}

@Article{DB-DV-LST-JH:05,
  author =	 {D.Bratsun and D. Volfson and L.S. Tsimring and J. Hasty},
  title =	 {Delay-induced stochastic oscillations in gene regulation},
  journal =	 {Proceedings of the National Academy of Sciences},
  year =	 2005,
  volume =	 102,
  number =	 41,
  pages =	 {14593-14598}
}

@Article{GO-JM-RMM:08,
  author =	 {G. Orosz and J. Moehlis AND R. M. Murray},
  title =	 {Controlling biological networks by time-delayed signals},
  journal =	 {Transaction of the Royal Society},
  year =	 2008,
  volume =	 9,
  number =	 12,
  pages =	 {981-991}
}

@Book{EF:14,
  author =       {Emilia Fridman},
  title =        {Introduction to time-Delay systems},
  publisher =    {Springer International Publishing},
  year =         2014,
  isbn =     {978-3-319-09392-5}
} 

@Book{MW-YH-JS:10,
  author =       {M. Wu and Y. He and J. She},
  title =        {Stability analysis and robust control of time-delay systems},
  publisher =    {Springer-Verlag Berlin Heidelberg},
  year =         2010,
  isbn =     {978-3-642-03037-6}
}

@Article{KP:95,
  author =	 {K. Pyragas},
  title =	 {Control of chaos via extended delay feedback},
  journal =	 {Physics Letters A},
  year =	 1995,
  volume =	 206,
  number =	 6,
  pages =	 {323-330}
}

@Article{KP:95,
  author =	 {K. Pyragas},
  title =	 {Control of chaos via extended delay feedback},
  journal =	 {Physics Letters A},
  year =	 1995,
  volume =	 206,
  number =	 6,
  pages =	 {323-330}
}

@Article{KP:95,
  author =	 {K. Pyragas},
  title =	 {Control of chaos via extended delay feedback},
  journal =	 {Physics Letters A},
  year =	 1995,
  volume =	 206,
  number =	 6,
  pages =	 {323-330}
}

@Article{BY-CDM:95,
  author =	 {B. Yang and C.D. Mote},
  title =	 {On Time Delay in Noncolocated Control of Flexible Mechanical Systems},
  journal =	 {Journal of Dynamic Systems Measurement and Control},
  year =	 1995,
  volume =	 114,
  number =	 3,
  pages =	 {409-415}
}

@Article{AGU:15,
  author =	 {A. G. Ulsoy},
  title =	 {Time-Delayed Control of SISO Systems for Improved Stability Margins},
  journal =	 {Journal of Dynamic Systems, Measurement, and Control},
  year =	 2015,
  volume =	 137,
  number =	 4,
  pages =	 {324-334}
  }

@article{IM-TH:99,
author = {I. Matsunaga and T. Hara},
title = {The asymptotic stability of a two-dimensional linear delay difference equation},
journal = {Dynamic Continuous and Discrete Impulse Systems},
volume = {6},
number = {},
pages = {465--473},
year = {1999},

}
  
  
  
  @Book{WJK-MTN:01,
  author =	 {W. J. Kaczor and M. T. Nowak},
  title =	 {Problems in Mathematical Analysis II: Continuity and Differentiation},
  publisher =	 {AMS},
  year =	 2001,
  ISBN =	 9781470418212
}

@article{JKH-SMV:01,
author = {J.K. Hale and S.M. Verduyn Lunel},
title = {Effects of small delays on stability and control},
journal = {Operator Theory: Advances and Applications},
volume = {122},
number = {},
pages = {},
year = {2001},

}

@article{UH:95,
author = {U. Heiden	
},
title = {Oscillations and chaos in nonlinear delay differential equations with applications to physiology},
journal = {Proceedings of the first world congress on World congress of nonlinear analysts},
volume = {4},
number = {},
pages = {3095-3107},
year = {1995},

}

@article{OS-SM:00,
author = {O. Santos and S. Mondie	
},
title = {Control laws involving distributed time delays: robustness of the implementation},
journal = acc,
volume = {},
number = {5781-5788},
pages = {3095-3107},
year = {2000},

}

@article{VVA-MD-JFL-JPR:99,
author = {V.V. Assche and M. Dambrine and J.F. Lafay and J.P. Richard	
},
title = {Control laws involving distributed time delays: robustness of the implementation},
journal = cdc,
volume = {},
number = {5781-5788},
pages = {14593-14598},
year = {1999},

}



@article{BG-SM-MHS:98,
author = {B. Ghosh and S. Muthukrishnan and M.H. Schultz},
title = {First and Second-order Diffusive Methods for Rapid,
Coarse, Distributed Load Balancing},
journal = {Theory of Computing
Systems},
volume = {31},
number = {},
pages = {331--354},
year = {1998},

}


@article{MC-DAS-EMY:06,
author = {M. Cao and D.A. Spielman and E.M. Yeh},
title = {Accelerated Gossip Algorithms for Distributed
Computation},
journal = {In Proceedings of the 44th Annual Allerton Conference},
volume = {},
number = {},
pages = {952--959},
year = {2006},

}


 @Book{RAH-CRJ:90,
  author =	 {R. A. Horn and C. R. Johnson},
  title =	 {Matrix analysis},
  publisher =	 {Cambridge University Press},
  year =	 1990,
  ISBN =	  9780521548236 
}


@article{SG:11,
author = {Suat Gumussoy},
title = {Relative stability analysis of closed-loop
SISO dead-time systems: non-imaginary
axis case},
journal = {Transactions of the Institute of
Measurement and Control},
volume = {34},
number = {4},
pages = {411-421},
year = {2011},

}





@article{AI-EL-ST:02,
author = {A. Ivanov and E. Liz and S. Trofimchuk},
title = {{H}alanay inequality, {Y}orke 3/2 stability criterion, and differential equations with maxima},
journal = {Tohoku Mathematical Journal },
volume = {54},
number = {2},
pages = {277-295},
year = {2002},

}


@article{JAF-RMM:04,
	author={J.A. Fax and R.M. Murray},
	title={Information Flow and
Cooperative Control of Vehicle Formations},
	journal=tac,
	year=2004,
	volume={42},
	number = {2},
	pages={465--1476}
	}
	
		@article{SB-AG-BP-DS:06,
	author={S. Boyd and A. Ghosh and B. Prabhakar and  D. Shah},
	title={Randomized gossip
algorithms},
	journal={IEEE Information Theory Society },
	year=2006,
	volume={52},
		number = {6},
	pages={2508--2530}
	
	}
	
		@article{TCA-BNO-MJC:09,
	author={T.C. Aysal and B. N. Oreshkin and M. J. Coates},
	title={Accelerated Distributed Average Consensus via
Localized Node State Prediction},
	journal=tsp,
	year=2009,
	volume={57},
		number = {4},
	pages={1563--1576}
	
	}
	


	
@InProceedings{YC-SSK:19b,
  author =       {Y. Chung and S. S. Kia},
  title =        {Distributed dynamic containment control over a strongly connected and weight-balanced digraph},
  booktitle =    necsys,
  year =         2019,
  address =      {Chicago, IL},
}

	
	
	
@InProceedings{DS-HP:04,
  author =       {D. Scherber and H. Papadopoulos},
  title =        {Locally constructed algorithms for
distributed computations in ad hoc networks},
  booktitle =    {ACM/
IEEE Int. Symp. Information Processing in Sensor Networks},
  year =         2004,
  address =      {Berkeley, CA},
}


@InProceedings{YG-MS-CS-NM:16,
  author =       {Yaser Ghaedsharaf and Milad Siami and Christoforos Somarakis and Nader Motee},
  title =    {Interplay between performance and communication delay in noisy linear consensus networks},
  booktitle =    ecc,
  year =         2017,
  address =      {Aalborg, Denmark},
}

@InProceedings{HM-SSK:18b,
  author =       {H. Moradian and S.S. Kia},
  title =    {A study on rate of convergence increase due to time delay for a class of linear systems},
  booktitle =    cdc,
  year =         2018,
  address =      {Miami, US},
}


@InProceedings{SK-JMF:06,
  author =       {S. Kar and J. M. F. Moura},
  title =        {Topology for Global Average Consensus},
  booktitle =    {Fortieth Asilomar Conference on Signals, Systems and Computers},
  year =         2006,
  address =      {Pacific Grove, CA, USA},
}


@InProceedings{PH-JSB-VG:08,
  author =       {P. Hovareshti and J. S. Baras and V. Gupta},
  title =        {Average Consensus over Small World Networks:
A Probabilistic Framework},
  booktitle =    cdc,
  year =         2008,
  address =      {Cancun, Mexico,},
}

@Article{HM-SSK-arxiv,
  author =       {H. Moradian and S.S. Kia},
  title =        {Accelerated Average Consensus Algorithm Using Outdated Feedback},
  year =         2018,
  note =         {Available at \url{http://solmaz.eng.uci.edu/Papers/2018_ECC_HM_SSK_Delay.pdf}}
  }
  
  
  
@article{FMA-AGU:03,
	author={F.M. Asl and A.G. Ulsoy },
	title={Analysis of a system of linear delay differential equations},
	journal={Journal of Dynamic Systems, Measurement, and Control},
	year=2003,
	volume={125},
	number = {2},
	pages={215--223}
	}
	
	
	@article{TH-ZL-YS:03,
author = {T. Hu and Z. Lin and Y. Shamash  	
},
title = {On Maximizing the Convergence Rate for Linear Systems
with Input Saturation },
journal = tac,
volume = {48},
number = {6},
pages = {1249 --1253},
year = {2003},

}

@article{SD-JN-AGU:11,
  Author =	 {S. Duan and J. Ni and A. G. Ulsoy},
  Title =	 {Decay function estimation for linear time
delay systems via the {L}ambert {W} function},
  Journal =	 {{J}ournal of {V}ibration and {C}ontrol},
  Number =	 10,
  Pages =	 {1462--1473},
  Volume =	 18,
  Year =	 2011
}

@article{ATK-JAF-AKR:13,
  Author =	 {T. A. Kamal and J. A. Farrell and A. K. Roy-Chowdhury},
  Title =	 {Information Weighted Consensus Filters and Their Application in Distributed Camera Networks},
  Journal =	 tac,
  Number =	 12,
  Pages =	 {3112--3125},
  Volume =	 58,
  Year =	 2013
}


@Article{AO-JNT:06,
  author =       {A. Olshevsky and J. N. Tsitsiklis },
  title =        {CONVERGENCE SPEED IN DISTRIBUTED CONSENSUS AND
AVERAGING},
  journal = {SIAM Journal of Scientific Computing},
  year =         2006,
  volume =       48,
  number =       1,
  pages =        {33–55}
}

@book{utts2021mind,
  title={Mind on statistics},
  author={Utts, Jessica M and Heckard, Robert F},
  year={2021},
  publisher={Cengage Learning}
}

@article{van2017fastest,
  title={The fastest known globally convergent first-order method for minimizing strongly convex functions},
  author={Van Scoy, Bryan and Freeman, Randy A and Lynch, Kevin M},
  journal={IEEE Control Systems Letters},
  volume={2},
  number={1},
  pages={49--54},
  year={2017},
  publisher={IEEE}
}

@article{TCA-BNO-MJC:08,
  title={Accelerated distributed average consensus via localized node state prediction},
  author={Aysal, Tuncer Can and Oreshkin, Boris N and Coates, Mark J},
  journal={IEEE Transactions on signal processing},
  volume={57},
  number={4},
  pages={1563--1576},
  year={2008},
  publisher={IEEE}
}

@book{nesterov2013introductory,
  title={Introductory Lectures on Convex Optimization: A Basic Course},
  author={Nesterov, Y.},
  isbn={9781441988539},
  lccn={2003061994},
  series={Applied Optimization},
  url={https://books.google.com/books?id=2-ElBQAAQBAJ},
  year={2013},
  publisher={Springer US}
}

@inproceedings{bu2018accelerated,
  title={Accelerated consensus with linear rate of convergence},
  author={Bu, Jingjing and Fazel, Maryam and Mesbahi, Mehran},
  booktitle={2018 annual American control conference (ACC)},
  pages={4931--4936},
  year={2018},
  organization={IEEE}
}

\begin{thebibliography}{10}

\bibitem{PY-RAF-KML:08}
P.~Yang, R.~A. Freeman, and K.~M. Lynch, ``Multi-agent coordination by
  decentralized estimation and control,'' vol.~53, no.~11, pp.~2480--2496,
  2008.

\bibitem{ROS-JSS:05}
R.~Olfati-Saber and J.~S. Shamma, ``Consensus filters for sensor networks and
  distributed sensor fusion,'' (Seville, Spain), pp.~6698--6703, December 2005.

\bibitem{ROS:07}
R.~Olfati-Saber, ``Distributed kalman filtering for sensor networks,'' (New
  Orleans, USA), pp.~5492--5498, December 2007.

\bibitem{WR-UMA:17}
W.~Ren and U.~M. Al-Saggaf, ``Distributed {K}alman-{B}ucy filter with embedded
  dynamic averaging algorithm,'' {\em {I}{E}{E}{E} {S}ystems {J}ournal},
  no.~99, pp.~1--9, 2017.

\bibitem{SM:07}
S.~Meyn, {\em Control Techniques for Complex Networks}.
\newblock 2007.

\bibitem{JAF-RMM:04}
J.~Fax and R.~Murray, ``Information flow and cooperative control of vehicle
  formations,'' vol.~42, no.~2, pp.~465--1476, 2004.

\bibitem{ROS-JAF-RMM:07}
R.~Olfati-Saber, J.~A. Fax, and R.~M. Murray, ``Consensus and cooperation in
  networked multi-agent systems,'' vol.~95, no.~1, pp.~215--233, 2007.

\bibitem{MF:73}
M.~Fiedler, ``Algebraic connectivity of graphs,'' {\em {C}zechoslovak
  {M}athematical {J}ournal}, vol.~23, no.~2, pp.~298–--305, 1973.

\bibitem{ROS-RMM:04}
R.~Olfati-Saber and R.~M. Murray, ``Consensus problems in networks of agents
  with switching topology and time-delays,'' vol.~49, no.~9, pp.~1520--1533,
  2004.

\bibitem{SSK-BVS-JC-RAF-KML-SM:19}
S.~S. Kia, B.~V. Scoy, J.~Cort{\'e}s, R.~A. Freeman, K.~M. Lynch, and
  S.~Mart{\'\i}nez, ``Tutorial on dynamic average consensus: The problem, its
  applications, and the algorithms,'' vol.~39, no.~3, pp.~40--72, 2019.

\bibitem{LX-SB:04}
L.~Xiao and S.~Boyd, ``Fast linear iterations for distributed averaging,''
  vol.~53, pp.~65--78, 2004.

\bibitem{SB-AG-BP-DS:06}
S.~Boyd, A.~Ghosh, B.~Prabhakar, and D.~Shah, ``Randomized gossip algorithms,''
  {\em IEEE Information Theory Society}, vol.~52, no.~6, pp.~2508--2530, 2006.

\bibitem{SK-JMF:06}
S.~Kar and J.~M.~F. Moura, ``Topology for global average consensus,'' in {\em
  Fortieth Asilomar Conference on Signals, Systems and Computers}, (Pacific
  Grove, CA, USA), 2006.

\bibitem{PH-JSB-VG:08}
P.~Hovareshti, J.~S. Baras, and V.~Gupta, ``Average consensus over small world
  networks: A probabilistic framework,'' (Cancun, Mexico,), 2008.

\bibitem{HM-SSK:22tcns}
H.~Moradian and S.~S. Kia, ``A study on accelerating average consensus
  algorithms using delayed feedback,'' {\em Accepted in {I}{E}{E}{E}
  {T}ransactions on {C}ontrol of {N}etwork {S}ystems, available online at
  \url{https://arxiv.org/pdf/1912.04442.pdf}}, 2022.

\bibitem{HM-SSK:20tac}
H.~Moradian and S.~S. Kia, ``On the positive effect of delay on the rate of
  convergence of a class of linear time-delayed systems,'' vol.~65, no.~11,
  pp.~4832--4839, 2020.

\bibitem{YG-MS-CS-NM:16}
Y.~Ghaedsharaf, M.~Siami, C.~Somarakis, and N.~Motee, ``Interplay between
  performance and communication delay in noisy linear consensus networks,''
  (Aalborg, Denmark), 2017.

\bibitem{ZM-YC-WR:10}
Z.~Meng, Y.~Cao, and W.~Ren, ``Stability and convergence analysis of
  multi-agent consensus with information reuse,'' vol.~83, no.~5,
  pp.~1081--1092, 2010.

\bibitem{YC-WR:10}
Y.~Cao and W.~Ren, ``Multi-agent consensus using both current and outdated
  states with fixed and undirected interaction,'' {\em Journal of Intelligent
  and Robotic Systems}, vol.~58, no.~1, pp.~95--106, 2010.

\bibitem{WQ-RS:13}
W.~Qiao and R.~Sipahi, ``A linear time-invariant consensus dynamics with
  homogeneous delays: analytical study and synthesis of rightmost
  eigenvalues,'' {\em SIAM Journal on Control and Optimization}, vol.~51,
  no.~5, pp.~3971--3991, 2013.

\bibitem{TCA-BNO-MJC:08}
T.~C. Aysal, B.~N. Oreshkin, and M.~J. Coates, ``Accelerated distributed
  average consensus via localized node state prediction,'' {\em IEEE
  Transactions on signal processing}, vol.~57, no.~4, pp.~1563--1576, 2008.

\bibitem{bu2018accelerated}
J.~Bu, M.~Fazel, and M.~Mesbahi, ``Accelerated consensus with linear rate of
  convergence,'' in {\em 2018 annual American control conference (ACC)},
  pp.~4931--4936, IEEE, 2018.

\bibitem{nesterov2013introductory}
Y.~Nesterov, {\em Introductory Lectures on Convex Optimization: A Basic
  Course}.
\newblock Applied Optimization, Springer US, 2013.

\bibitem{BVS-RAF-KML:18}
B.~{Van Scoy}, R.~A. {Freeman}, and K.~M. {Lynch}, ``The fastest known globally
  convergent first-order method for minimizing strongly convex functions,''
  {\em IEEE Control Systems Letters}, vol.~2, pp.~49--54, Jan 2018.

\bibitem{RAH-CRJ:90}
R.~A. Horn and C.~R. Johnson, {\em Matrix analysis}.
\newblock Cambridge University Press, 1990.

\bibitem{FB-JC-SM:09}
F.~Bullo, J.~Cort{\'e}s, and S.~Mart{\'\i}nez, {\em Distributed Control of
  Robotic Networks}.
\newblock Applied Mathematics Series, 2009.

\bibitem{SN:01}
S.-I. Niculescu, {\em Delay effects on stability: a robust control approach},
  vol.~269.
\newblock Springer Science \& Business Media, 2001.

\bibitem{SAK:94}
S.~A. Kuruklis, ``The asymptotic stability of xn+ 1- axn+ bxn- k= 0,'' {\em
  Journal of Mathematical Analysis and Applications}, vol.~188, no.~3,
  pp.~719--731, 1994.

\bibitem{ISL:05}
I.~Levitskaya, ``A note on the stability oval for $x_{n+1}=x_n+ax_{n-k}$,''
  {\em Journal of Difference Equations and Applications}, vol.~11, no.~8,
  pp.~701--705, 2005.

\bibitem{HM-SSK:18}
H.~Moradian and S.~S. Kia, ``On robustness analysis of a dynamic average
  consensus algorithm to communication delay,'' vol.~6, no.~2, pp.~633--641,
  2018.

\bibitem{utts2021mind}
J.~M. Utts and R.~F. Heckard, {\em Mind on statistics}.
\newblock Cengage Learning, 2021.

\bibitem{yang2010decentralized}
Yang, Peng and Freeman, Randy A and Gordon, Geoffrey J and Lynch, Kevin M and Srinivasa, Siddhartha S and Sukthankar, Rahul, ``Decentralized estimation and control of graph connectivity for mobile sensor networks,'' vol.~46, no.~2, pp.~390--396, 2010.

\bibitem{mohar1991eigenvalues}
Mohar, Bojan, ``Eigenvalues, diameter, and mean distance in graphs,'' {\em IEEE
  Graphs and combinatorics}, vol.~7, no.~1, pp.~53--64, 1991.
  
\bibitem{JB-MF-MM:18}
J. Bu, M. Fazel and M. Mesbahi, ``Accelerated Consensus with Linear Rate of Convergence,'' pp.~4931--4936, 2018.
  
\bibitem{dempster1977maximum}
Dempster, Arthur P and Laird, Nan M and Rubin, Donald B, ``Maximum likelihood from incomplete data via the EM algorithm,'' vol.~39, pp.1--22, 1977.

\bibitem{valdeira2022decentralized}
Valdeira, Pedro and Soares, Cl{\'a}udia and Xavier, Jo{\~a}o, ``Decentralized EM to Learn Gaussian Mixtures from Datasets Distributed by Features,'' {\em arXiv preprint arXiv:2201.09965}, 2022.
  
\bibitem{altilio2019distributed}
Altilio, Rosa and Di Lorenzo, Paolo and Panella, Massimo, ``Distributed data clustering over networks,'' {\em Pattern Recognition}, vol.~93, pp.~603--620, 2019.

\end{thebibliography}

\end{document}